\def\XXint#1#2#3{{\setbox0=\hbox{$#1{#2#3}{\int}$ }
\vcenter{\hbox{$#2#3$ }}\kern-.6\wd0}}
\def\({\left(}
\def \){ \right)}
 \def\dim{\operatorname{dim}}
\newtheorem{theorem}{Theorem}[section]
\newtheorem{lemma}[theorem]{Lemma}
\theoremstyle{definition}
\renewcommand{\appendix}{\par
   \setcounter{section}{0}%
   \setcounter{subsection}{0}%
   \setcounter{subsubsection}{0}%
   \gdef\thesection{\@Alph\c@section}%
   \gdef\thesubsection{\@Alph\c@section.\@arabic\c@subsection}%
   \gdef\theHsection{\@Alph\c@section.}%
   \gdef\theHsubsection{\@Alph\c@section.\@arabic\c@subsection}%
   \csname appendixmore\endcsname
 }
\numberwithin{equation}{section}
\begin{document}

\arraycolsep=1pt

\title{\bf\Large A Carleson problem for the Boussinesq operator
\footnotetext{\hspace{-0.35cm} 2010 {\it
Mathematics Subject Classification}. 42B25, 35Q20.
\endgraf {\it Key words and phrases.} Carleson problem, Boussinesq operator, pointwise convergence, Hausdorff dimension, Sobolev space.
}
}
\author{ Dan Li, Junfeng Li\footnote{Corresponding author} ~and Jie Xiao}
\date{}
\maketitle

\vspace{-0.7cm}

\begin{center}
\begin{minipage}{13cm}
{\small {\bf Abstract}\quad
In this paper, Theorems \ref{theorem 1.1}-\ref{theorem 1.10} show that the Boussinesq operator $\mathcal{B}_tf$ converges pointwise to its initial data $f\in H^s(\mathbb{R})$ as $t\to 0$ provided $s\geq\frac{1}{4}$ - more precisely - on the one hand, by constructing a counterexample in $\mathbb{R}$ we discover that the optimal convergence index $s_{c,1}=\frac14$; on the other hand, we find that the Hausdorff dimension of the disconvergence set for $\mathcal{B}_tf$ is
		\begin{align*}
		\alpha_{1,\mathcal{B}}(s)&=\begin{cases}
		1-2s&\ \ \text{as}\ \ \frac{1}{4}\leq s\leq\frac{1}{2};\\
		1 &\ \ \text{as}\ \ 0<s<\frac{1}{4}.
		\end{cases}\\
		\end{align*}
		Moreover, Theorem \ref{theorem 1.7} presents a higher dimensional lift of Theorems \ref{theorem 1.1}-\ref{theorem 1.10} under $f$ being radial.

}
\end{minipage}
\end{center}


\section{Introduction}\label{s1}

\subsection{Carleson problem for Schr\"odinger operator}\label{s11}
For $(n,s)\in\mathbb N\times\mathbb R$ and the Schwartz space $S(\mathbb R^n)$, let
$$H^s(\mathbb{R}^n):=\left\{f\in S(\mathbb R^n):\ \|f\|_{H^s(\mathbb{R}^n)}=\left(\int_{\mathbb{R}^n} \left(1+|\xi|^2\right)^s \left|\hat{f}(\xi)\right|^2\,\textrm{d}\xi\right)^{\frac{1}{2}}<\infty\right\}$$
be the $s$-order $L^2$ Sobolev space on $\mathbb R^n$. Formally, the Schr\"odinger operator acting on $f\in S(\mathbb R^n)$
\begin{align}\label{Schrodinger}
\mathcal{S}_tf(x):=(2\pi)^{-n}\int_{\mathbb{R}^n} e^{ix\cdot\xi}e^{it|\xi|^2}\hat{f}(\xi)\,\textrm{d}\xi
\end{align}
solves the initial data problem of free Schr\"odinger equation
\begin{equation}\label{SE}\begin{cases}
		i\partial_t u+\Delta_x u=0;\\
		u(x,0)=f(x).
		\end{cases}
\end{equation}		

Due to a fundamental interest in mathematical and theoretical physics, in \cite{C} Carleson proposed a problem to determine the optimal order $s_{c,n}$ such that
\begin{align}\label{ConvergenceS}
\lim_{t\rightarrow0}\mathcal{S}_tf(x)=f(x), \quad\text{a.e.}\ \  x\in\mathbb{R}^n
\end{align}
holds for all $f\in H^s(\mathbb{R}^n)$ with $s\geq s_{c,n}$.

There are enumerate literatures devoted to this problem (see \cite{B3,B1,B2,L,MYZ,MVV,S1,S3,TV,V2,V1} and the references therein) - in particular -  $s_{c,n}=\frac{n}{2(n+1)}.$ However, it is perhaps appropriate to  mention some important steps toward the formula on $s_{c,n}$.
\begin{itemize}
\item[$\rhd$]
 For $n=1$, Carleson \cite{C} proved \eqref{Schrodinger} converges to its initial data with $s\geq \frac{1}{4}$ and Dahlberg-Kenig \cite{DK} gave counterexample to show that this convergence cannot be true for $s<\frac14$.
\item[$\rhd$] For $n\geq 2$, Bourgain \cite{B2} and Sj\"olin \cite{S4} formulated independently  counterexamples for  $s<\frac{n}{2(n+1)}$. Recently, a positive result has been established under $s>\frac{n}{2(n+1)}$ by Du- Guth-Li \cite{DGL} for $n=2$ and Du-Zhang \cite{DZ} for $n\geq 3$.
\end{itemize}

Furthermore, in \cite{SS} Sj\"{o}gren-Sj\"{o}lin refined Carleson's problem to determine
the Hausdorff dimension of the disconvergence set:
\begin{align}\label{DimensionS}
\alpha_{n,\mathcal{S}}(s):=\sup_{f\in H^s(\mathbb{R}^n)}\dim _H\left\{x\in\mathbb{R}^n: \lim_{t\rightarrow0}\mathcal{S}_tf(x)\neq f(x)\right\}.
\end{align}

By Sobolev embedding, we easily get $$\alpha_{n,\mathcal{S}}(s)=0\ \ \forall\ \ s>\frac{n}{2},
$$
thereby being led to consider the case $s\leq\frac n2$.
\begin{itemize}

\item[$\rhd$]
Bourgain's counterexample in \cite{B2} implies  $$\alpha_{n,\mathcal{S}}(s)= n\ \ \forall\ \ s<\frac{n}{2(n+1)}.
$$
\item[$\rhd$] Luc\`{a}-Rogers in \cite{LR2} proved
$$\alpha_{n,\mathcal{S}}(s)= n\ \ \text{as}\ \ s=\frac{n}{2(n+1)}.
$$
\item[$\rhd$] For $\frac{n}{4}\leq s\leq\frac{n}{2}$, we can combine the results in \v{Z}ubrini\'{c} \cite{Z} and Barcel\'{o}-Bennett-Carbery-Rogers \cite{BBCR} to obtain $\alpha_{n,\mathcal{S}}(s)=n-2s.$

\item[$\rhd$] Notice that if
$$n=1\ \ \&\ \  ~\frac{n}{2(n+1)}=\frac{n}{4}=\frac14,$$
then $\alpha_{1,\mathcal{S}}(s)=1-2s$. And yet, for
$$n\geq 2\ \ \&\ \ \frac{n}{2(n+1)}<s< \frac n4,
$$
nobody knows the value of $\alpha_{n,\mathcal{S}}(s)$; see also \cite{DGLZ,DZ,LR3,LR1,LR2} for more information.
\end{itemize}

\subsection{Carleson problem for  Boussinesq operator}\label{s12}
As a nonlinear variant of \eqref{Schrodinger}, the Boussinesq operator acting on $f\in S(\mathbb{R}^n)$ is defined by
\begin{align}\label{Boussinesq}
 u(x,t)
:=\mathcal{B}_tf(x)
:=(2\pi)^{-n}\int_{\mathbb{R}^n} e^{ix\cdot\xi}e^{it|\xi|\sqrt{1+|\xi|^2}}\hat{f}(\xi)\,\textrm{d}\xi,
\end{align}
which  occurs in a large number of physical situations, which has motivated their study in  physics and mathematics. The name of this operator comes from  the  Boussinesq equation (cf.\cite{Bous})
$$u_{tt}-u_{xx}\pm u_{xxxx}=(u^2)_{xx}\ \ \forall\ \ (t,x)\in \mathbb R^2
$$
modelling the propagation of long waves on the surface of water with small amplitude. Our interest in this operator arises from the study of the Gross-Pitaevskii (G-P) equation
\begin{equation*}\label{GP}
i\partial_t \psi+\Delta\psi=(|\psi|^{2}-1)\psi\ \ \text{subject to}\ \  \psi: \mathbb{R}^{1+n}\rightarrow\mathbb{C}\ \ \&\ \
\lim_{|x|\rightarrow\infty}\psi=1.
\end{equation*}
 The above nonzero boundary condition arises naturally in physical contexts such as Bose-Einstein condensates, superfluids and nonlinear optics, or in the hydrodynamic interpretation of NLS (cf. \cite{FS}). There are many literatures studying the existence and asymptotic behaviour of a solution to the G-P equation. For the most recent progress on these topics we refer the readers to \cite{BGS, Ge, GNT1, GNT2, GNT3, GHN}. Even though \eqref{Boussinesq} is very close to \eqref{Schrodinger},  the constant boundary condition brings a remarkable effect on the space-time behaviour of a solution - this actually is one of the main motivations of this paper.

 Quite surprisingly, upon letting $v=\psi-1$, we find
$$i\partial_tv +\Delta v-2\Re v=v^2+2|v|^2+|v|^2v,$$
thereby using the diagonal transform
$$v=v_1+iv_2\rightarrow u=u_1+iu_2:=v_1+iUv_2,
$$
to get the following system for $(u,v):$
$$
\begin{cases} i\partial_t u-Hu=U(3v_1^2+v_2^2+|v|^2v_1)+i(2v_1v_2+|v|^2v_2);\\
U:=\sqrt{-\Delta(2-\Delta)^{-1}};\\
H:=\sqrt{-\Delta(2-\Delta)}.
\end{cases}
$$
Accordingly, \eqref{Boussinesq} solves the initial data problem of the induced Boussinesq equation
\begin{equation}\label{GP}\begin{cases}
		i\partial_t u-Hu=0;\\
		u(x,0)=f(x).
		\end{cases}
\end{equation}

In this paper, we are motivated by \S \ref{s11} and similarity between \eqref{Schrodinger} (solving \eqref{SE}) and \eqref{Boussinesq} (solving \eqref{GP}) to consider:
\begin{enumerate}
\item[I.] The Carleson problem for $\mathcal{B}_tf(x)$: evaluating the optimal $s_{c,n}$ such that
\begin{align}\label{Convergence}
\lim_{t\rightarrow0}\mathcal{B}_tf(x)=f(x), \quad\text{a.e.}\ \  x\in\mathbb{R}^n
\end{align}
holds for any $f\in H^s(\mathbb{R}^n)$  with $s\geq s_{c,n}$.
\item[II.] A refinement of the Carleson problem for $\mathcal{B}_tf(x)$: determining the Hausdorff dimension of the disconvergence set:
\begin{align}\label{Dimension}
\alpha_{n,\mathcal{B}}(s):=\sup_{f\in H^s(\mathbb{R}^n)}\dim _H\left\{x\in\mathbb{R}^n: \lim_{t\rightarrow0}\mathcal{B}_tf(x)\neq f(x)\right\}.
\end{align}
\end{enumerate}

In order to resolve this issue, for any $f\in H^{s>0}(\mathbb{R}^n)$ we make the following decomposition
$$f(x)=f_{<1}(x)+f_{\geq1}(x)\ \ \&\ \ \widehat{f_{<1}}(\xi)=\hat{f}(\xi)\phi(\xi),$$
where $\phi$ is a bump function based on the origin-centered ball $B(0,2)$ with radius $2$ and satisfies
$\phi|_{B(0,1)}=1$.
 Thus $$f_{<1}\in H^s(\mathbb{R}^n)\ \ \forall\ \ s>\frac n2
 ,$$
 which implies
$$\lim_{t\rightarrow0}\mathcal{B}_t(f_{<1})(x)=f_{<1}(x).$$
Accordingly, \eqref{Convergence} amounts to
$$\lim_{t\rightarrow0}\mathcal{B}_t(f_{\geq1})(x)=f_{\geq1}(x), \quad\text{a.e.}\ \  x\in\mathbb{R}^n.$$
Upon noticing
$$|\xi|\sqrt{1+|\xi|^2}\approx |\xi|^2\quad\forall\quad |\xi|\geq 1,$$
we may guess that the Boussinesq operator should behave like the Schr\"odinger operator. However, such a guessing is not easily confirmed. Here are two instances.
\begin{itemize}
	\item [$\rhd$] \cite{DGL, DGLZ,DZ} used the $l^2$-decoupling and polynomial partitioning to set up the following inequality:
$$\Bigg\|\sup_{0<t<1}\left|\mathcal{S}_t(f)(x)\right|\Bigg\|_{L^q\big(B(0,1)\big)}
\lesssim R^{\beta(n,q)}\|f\|_{L^2(\mathbb R^n)},$$
under a suitable condition:

\begin{equation*}
\left\{
\begin{aligned}
 &1<q<\infty;\\
&\beta(n,q)>0; \\
&\text{supp} \hat{f}\subseteq A_R:=\{\xi\in\mathbb{R}^n: |\xi|\approx\ \text{a dyadic number}\ R\}.\\
\end{aligned}
\right.
\end{equation*}
 For this purpose, a scaling argument is essential.  And $\mathcal{S}_t$ is scaling invariant. But $\mathcal{B}_{t}$ does not have this symmetry which is the main difficulty.

 \item[$\rhd$] Cho-Ko \cite{CK} extended  the convergence result on $\mathcal{S}_t$ to some generalized dispersive operators excluding the Boussinesq operator.
 \end{itemize}

But nevertheless, we can still get a first outcome for the one-dimensional case as described below.

\begin{theorem}\label{theorem 1.1}
The optimal order $s_{c,1}=\frac14$ follows from two assertions as seen below.
\begin{enumerate}
  \item[\rm(i)]
  If $s\geq\frac{1}{4}$ and $f\in H^s(\mathbb{R})$, then
  \begin{align}\label{eq:1.1}
 \lim_{t\rightarrow0}\mathcal{B}_tf(x)=f(x),\quad\text{a.e.}\ \  x\in\mathbb{R}.
\end{align}
  \item[\rm(ii)]
  For $0<s<\frac{1}{4}$, there exist two disjoint compact intervals $I, J\subset\mathbb{R}$ and a function $f_0\in H^s(\mathbb{R})$ supported in $I$
  such that
   $$\lim_{t\rightarrow0}\mathcal{B}_tf_0(x)=0 \quad \forall x\in J\ \ \text{fails}.
   $$
\end{enumerate}
\end{theorem}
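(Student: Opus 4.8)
The plan is to split Theorem \ref{theorem 1.1} into its positive half (i) and its sharpness half (ii), treating the two by quite different mechanisms, and in both cases to exploit the reduction already made in the introduction: since $f_{<1}$ converges trivially, everything reduces to data with $\supp\wh f\subseteq\{|\xi|\ge 1\}$, where $|\xi|\sqrt{1+|\xi|^2}$ is a smooth symbol comparable to $|\xi|^2$.

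For part (i), the standard route is to establish a local maximal estimate
\[
\Bigl\|\sup_{0<t<1}|\mathcal{B}_tf|\Bigr\|_{L^2(B(0,1))}\lesssim \|f\|_{H^{s}(\mathbb R)}\qquad (s\ge \tfrac14),
\]
from which \eqref{eq:1.1} follows for a dense class (Schwartz functions) and then for all $f\in H^s(\mathbb R)$ by the usual maximal-function/density argument (Stein's maximal principle). To prove the maximal estimate I would dyadically decompose $\wh f$ into pieces supported on $|\xi|\approx R$, $R\ge1$ dyadic. On such a piece the phase is $t\,\varphi(\xi)$ with $\varphi(\xi)=|\xi|\sqrt{1+|\xi|^2}$; the point is that $\varphi''(\xi)\approx 1$ uniformly for $|\xi|\ge 1$ (in one dimension $\varphi$ is, up to lower-order smooth corrections, a genuinely non-degenerate phase of the same type as $|\xi|^2$), so Carleson's original one-dimensional argument — or its modern incarnation via the $TT^\ast$/oscillatory-integral estimate $\|\mathcal{B}_t g\|_{L^4_{x}L^\infty_t}\lesssim \|g\|_{\dot H^{1/4}}$ coming from the Strichartz/Fefferman--Stein type bound for non-degenerate phases — applies with $R$-uniform constants. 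Summing the dyadic pieces with the $R^{-2s}$ gain from the $H^s$ norm against the $R^{1/2}$ loss from the maximal function gives convergence of the sum precisely when $s\ge\frac14$. One must verify that the lower-order term distinguishing $\varphi$ from $|\xi|^2$ is harmless, either by a stationary-phase/Van der Corput argument directly on $\varphi$ or by absorbing it through an almost-orthogonality (Littlewood--Paley) decomposition on each dyadic block.

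For part (ii), the plan is to mimic the Dahlberg--Kenig counterexample for the Schr\"odinger operator. One chooses $\wh{f_0}$ to be (a smooth truncation of) $|\xi|^{-\frac12-s}\mathbf 1_{[N,2N]}(\xi)$-type data, or more precisely a lacunary sum of such blocks at scales $N=N_k\to\infty$, with a phase-modulation $e^{-ix_0\xi}$ placing the "focusing" at a point; such $f_0$ lies in $H^{s'}$ exactly for $s'<s$ and can be taken supported (after a cutoff in $x$, at the cost of a negligible tail) in a fixed compact interval $I$. Then on a disjoint interval $J$ one evaluates $\mathcal{B}_tf_0(x)$ at the sequence of times $t_k\sim N_k^{-1}$: because $|\xi|\sqrt{1+|\xi|^2}=|\xi|^2+\tfrac12+O(|\xi|^{-2})$ on the relevant block, the Boussinesq phase $t_k(|\xi|\sqrt{1+|\xi|^2}-\xi^2)$ is $\tfrac12 t_k+O(t_kN_k^{-2})=O(1)$, hence an $O(1)$ modulation that does not destroy the stationary-phase concentration; one gets $|\mathcal{B}_tf_0(x)|\gtrsim N_k^{\frac14-s}\to\infty$ along $t=t_k$, at every $x$ in $J$, so the limit cannot be $0$. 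The main obstacle — and the place where genuine work beyond the Schr\"odinger case is needed — is precisely this control of the discrepancy between $|\xi|\sqrt{1+|\xi|^2}$ and $|\xi|^2$: in part (i) one needs the second-derivative non-degeneracy to be uniform over all dyadic scales $R\ge1$ (it is, but the lower-order term must be tracked through the scaling-free argument, since, as the authors stress, $\mathcal{B}_t$ is \emph{not} parabolically rescalable), and in part (ii) one needs the lower-order term to contribute only a bounded phase shift at the chosen times $t_k\sim N_k^{-1}$, which forces the counterexample to be built at those specific frequency-time scales rather than by a naive scaling of the Schr\"odinger example. I expect the bulk of the proof to be the careful stationary-phase bookkeeping showing that these lower-order corrections are uniformly harmless, with the dyadic summation and the density argument being routine once the single-block estimates are in hand.
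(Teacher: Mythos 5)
Your overall strategy for part (i) — prove a local maximal estimate, then pass to almost-everywhere convergence by density — matches the paper's, and you correctly identify the central issue: a stationary-phase estimate that holds uniformly over all frequency scales without the parabolic rescaling available for $\mathcal S_t$. However, the way you propose to assemble this into the maximal estimate does not reach the endpoint $s=\tfrac14$. Dyadically decomposing $\wh f$ into blocks at scale $R$ and summing with ``the $R^{-2s}$ gain from $H^s$ against the $R^{1/2}$ loss from the maximal function'' gives, after Cauchy--Schwarz, a geometric factor $\sum_R R^{1-2s}$ (or $\sum_R R^{1/2-2s}$ if you use the $L^4$ block estimate), which diverges precisely at $s=\tfrac14$. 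This is not a cosmetic point: the dyadic summation is an inherently lossy mechanism and does not recover the endpoint. What the paper does instead is avoid any frequency decomposition: it runs the $TT^\ast$ argument globally, reducing the maximal bound to a single kernel estimate
\[
|K_N(y,z)|=\Big|\phi(y)\phi(z)\int_{\mathbb R} e^{i((z-y)\xi+(t(z)-t(y))|\xi|\sqrt{1+\xi^2})}\rho_N^2(\xi)|\xi|^{-1/2}\,\mathrm d\xi\Big|\lesssim |y-z|^{-1/2},
\]
proved directly by van der Corput's lemma (Lemma~\ref{lemma 4.2}) with a careful three-region split of the stationary-phase analysis, and then closes the estimate by mapping $L^{4/3}\to L^4$ for the Riesz potential $I_{1/2}$. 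Your sketch cites the right model estimate $\|\mathcal S_t g\|_{L^4_xL^\infty_t}\lesssim\|g\|_{\dot H^{1/4}}$, but that bound is itself proved by exactly this kernel argument, not by dyadic summation; importing it ``with $R$-uniform constants'' and then re-summing dyadically is circular and lossy.

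For part (ii) your construction is in the right family (Dahlberg--Kenig wave packets at high frequency, tested at times $t_k\sim N_k^{-1}$), and your Taylor expansion $|\xi|\sqrt{1+|\xi|^2}=|\xi|^2+\tfrac12+O(|\xi|^{-2})$ correctly shows the Boussinesq phase differs from the Schr\"odinger phase by an $o(1)$ term at the relevant $(t_k,N_k)$. But the sketch omits the main content of the argument: when you sum a lacunary family of blocks into a single $f_0\in H^s$, you must show the $j\ne k$ blocks do not cancel the $k$-th block at the test point. The paper handles this with three separate lemmas: Lemma~\ref{lemma 2.2} (the diagonal block contributes $\ge c_0$ at $t=t_k(x)$ by stationary phase), Lemma~\ref{lemma 2.3} (higher-frequency blocks $j>k$ have already dispersed, giving $|\mathcal B_t f_{v_j}|\lesssim v_j/t^{1/2}$), and Lemma~\ref{lemma 2.4} (lower-frequency blocks $j<k$ have not yet arrived, giving $|\mathcal B_t f_{v_j}|\lesssim t/v_j^4$), and then chooses the super-exponentially decreasing scales $v_k=\varepsilon_k v_{k-1}^2$ so that both tails are $o(1)$. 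Calling this ``an $O(1)$ modulation that does not destroy the stationary-phase concentration'' skips the part of the proof that actually makes the lacunary sum work, and is also slightly off quantitatively: the paper obtains a uniform lower bound $c_0$ (not a blow-up $N_k^{1/4-s}\to\infty$) for the full sum, which is what is needed since $f_0$ vanishes on the test interval $J$.

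Your diagnosis of where the Schr\"odinger argument must be modified — the non-rescalability of $\mathcal B_t$ and the need to track the lower-order term in the symbol — is exactly right and matches the paper's emphasis; the gaps are in the endpoint mechanism for (i) and the block-interference estimates for (ii).
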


In order to state the second result on $\mathbb R$, we are required to introduce three more concepts.

\begin{itemize}
	\item[$\rhd$] If $\psi(r):=e^{-r^2}$, then
$$ \mathcal{B}_t^Nf(x):=(2\pi)^{-n}\int_{\mathbb{R}^n} \psi\left(\frac{|\xi|}{N}\right)e^{ix\cdot\xi}e^{it|\xi|\sqrt{1+|\xi|^2}}\hat{f}(\xi)\,\textrm{d}\xi
$$
is called the truncated operator.

\item[$\rhd$]
A positive Borel measure $\mu$ is \emph{$(0,n]\ni\alpha$-dimensional} provided
$$c_\alpha(\mu):=\sup_{(x,r)\in\mathbb{R}^n\times(0,\infty)}r^{-\alpha}{\mu\big(B(x,r)\big)}<\infty\ \ \text{with}\ \ B(x,r)=\{y\in\mathbb R^n: |y-x|<r\}.
$$
Moreover, $M^\alpha(\mathbb{B}^n)$ is the class of the $\alpha$-dimensional probability measures supported in the unit ball $\mathbb{B}^n=B(0,1)$.

\item[$\rhd$] If $A\lesssim B$ stands for $A\le c B$ for a constant $c>0$ and $A\approx B$ means  $A\lesssim B\lesssim A$, then
\begin{align}\label{0.2}
\bar{\alpha}_{n,\mathcal{B}}(s)
:=\inf\left\{\alpha: \frac{\left\|\sup_{(k,N)\in\mathbb N^2}\left|\mathcal{B}_{t_k}^Nf\right|\right\|_{L^1(d\mu)}}{\sqrt{c_\alpha(\mu)}\|f\|_{H^s(\mathbb{R}^n)}}\lesssim 1
\quad \forall (\mu,f,t_k)\in M^\alpha(\mathbb{B}^n)\times H^s(\mathbb{R}^n)\times(0,\infty)\right\}.
\end{align}
If there is no such $\alpha$, we say that
$\bar{\alpha}_{n,\mathcal{B}}(s)$ does not exist.
From \cite{BBCR} and an equivalence of the Hausdorff capacity in \cite[Section 3]{Ad} it follows that
\begin{align}\label{0.3}
\dim _H\left\{x\in\mathbb{R}^n: \lim_{k\rightarrow\infty}\mathcal{B}_{t_k}^Nf(x)\neq f(x)\right\}\leq \bar{\alpha}_{n,\mathcal{B}}(s)\ \ \text{as}\ \ f\in H^s(\mathbb{R}^n)\ \ \&\ \   t_k\rightarrow0.
\end{align}
\end{itemize}

\begin{theorem}\label{theorem 1.10}
Let
\begin{equation*}
\left\{
\begin{aligned}
 &\frac{1}{4}\leq s\leq\frac{1}{2};\\
&\alpha>1-2s; \\
&\mu\in M^\alpha(\mathbb{B});\\
&\lim_{k\rightarrow\infty}t_k=0.\\
\end{aligned}
\right.
\end{equation*}
Then:
\begin{itemize}
\item[\rm(i)] \begin{equation}\label{maximal estimate}
\left\|\sup_{(k,N)\in\mathbb N^2}\left|\mathcal{B}_{t_k}^Nf\right|\right\|_{L^1(d\mu)}\lesssim\sqrt{c_\alpha(\mu)}\|f\|_{H^s(\mathbb{R})};
\end{equation}
\item[\rm (ii)]
 $\bar{\alpha}_{1,\mathcal{B}}(s)=1-2s=\alpha_{1,\mathcal{B}}(s).$
 \end{itemize}
 \end{theorem}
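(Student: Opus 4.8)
The plan is to deduce the maximal estimate (i) from a frequency-localized estimate against $\mu$, and that in turn from a Schur-type bound on an oscillatory kernel; the two equalities in (ii) will then follow from (i), the inequality \eqref{0.3}, and a counterexample. Compared with the Schr\"odinger model treated in \cite{BBCR}, the only genuinely new analytic input is that the dispersion relation $\omega(\xi):=|\xi|\sqrt{1+|\xi|^2}$ is a non-degenerate perturbation of $|\xi|^2$ as soon as $|\xi|\ge1$: from $\omega(\xi)=|\xi|^2+\tfrac12+O(|\xi|^{-2})$ one reads off $\omega'(\xi)\approx|\xi|$, $\omega''(\xi)\approx1$ and $|\omega^{(m)}(\xi)|\lesssim_m|\xi|^{2-m}$, uniformly on $\{|\xi|\ge1\}$; this is precisely where the high-frequency truncation matters, since $\omega''$ degenerates at the origin.

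\emph{Step 1 (reductions for (i)).} First I would use the decomposition $f=f_{<1}+f_{\ge1}$ from \S\ref{s12}. Since $\widehat{f_{<1}}$ is supported in $B(0,2)$ and lies in $L^1$, $|\mathcal B_{t_k}^Nf_{<1}(x)|\lesssim\|\widehat{f_{<1}}\|_{L^1}\lesssim\|f\|_{L^2}\lesssim\|f\|_{H^s(\mathbb R)}$ uniformly in $(x,k,N)$, and as $\mu$ is a probability measure this bounds the $f_{<1}$-contribution to \eqref{maximal estimate}. For $f_{\ge1}$ I would run a Littlewood--Paley decomposition $f_{\ge1}=\sum_{j\ge0}f_j$ with $\widehat{f_j}$ supported in $\{|\xi|\approx2^j\}$, $2^j=:\lambda\ge1$, reducing matters to the single-block bound
\[
\Big\|\sup_{(k,N)\in\mathbb N^2}\big|\mathcal B_{t_k}^Ng\big|\Big\|_{L^2(d\mu)}\lesssim\sqrt{c_\alpha(\mu)}\,\lambda^{\beta}\,\|g\|_{L^2(\mathbb R)},\qquad\beta:=\max\!\Big\{\tfrac{1-\alpha}{2},\tfrac14\Big\},
\]
for $\widehat g$ supported in $\{|\xi|\approx\lambda\}$, together with a summation in $j$.

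\emph{Step 2 (the localized estimate).} I would linearize the supremum by measurable maps $x\mapsto(k(x),N(x))$, so the left side becomes $\|Tg\|_{L^2(d\mu)}$ with $Tg(x)=\mathcal B_{t_{k(x)}}^{N(x)}g(x)$, and control $\|T\|^2=\|TT^*\|_{L^2(d\mu)\to L^2(d\mu)}$ by the Schur test, i.e.\ by $\sup_x\int|K(x,y)|\,d\mu(y)$, where
\[
K(x,y)=\int_{|\xi|\approx\lambda}m_{x,y}(\xi)\,e^{\,i(x-y)\xi+i(t_{k(x)}-t_{k(y)})\,|\xi|\sqrt{1+|\xi|^2}}\,d\xi,
\]
$m_{x,y}$ smooth, $O(1)$, supported in $\{|\xi|\approx\lambda\}$. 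Writing $\tau:=t_{k(x)}-t_{k(y)}$, the total phase $(x-y)\xi+\tau\,\omega(\xi)$ has second derivative $\tau\,\omega''(\xi)\approx\tau$ and admits a critical point only when $|x-y|\lesssim|\tau|\lambda$; van der Corput's lemma in the stationary regime and integration by parts (exploiting the smallness of $\omega^{(m)}$, $m\ge2$) in the non-stationary one give the Schr\"odinger-type bound $|K(x,y)|\lesssim\lambda\min\{1,(\lambda|x-y|)^{-1/2}\}$. Splitting $\int|K(x,y)|\,d\mu(y)$ over $B(x,\lambda^{-1})$ and the dyadic shells $|x-y|\approx2^m\lambda^{-1}$, $0\le m\lesssim\log_2\lambda$ (recall $\supp\mu\subset\mathbb B$), and inserting $\mu(B(x,r))\le c_\alpha(\mu)r^\alpha$, yields $\int|K(x,y)|\,d\mu(y)\lesssim c_\alpha(\mu)\lambda^{2\beta}$ up to a logarithmic factor that is harmless below, which is the desired estimate.

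\emph{Step 3 (summation, the endpoint, and (ii)).} Summing the block estimates and applying Cauchy--Schwarz gives
\[
\sum_{j\ge0}\Big\|\sup_{k,N}\big|\mathcal B_{t_k}^Nf_j\big|\Big\|_{L^2(d\mu)}\lesssim\sqrt{c_\alpha(\mu)}\,\Big(\sum_{j\ge0}2^{2j(\beta-s)}\Big)^{1/2}\|f_{\ge1}\|_{H^s(\mathbb R)},
\]
which is finite precisely when $\beta<s$, i.e.\ (since $s\le\tfrac12$) when $\alpha>1-2s$ and $s>\tfrac14$; together with $\|\cdot\|_{L^1(d\mu)}\le\|\cdot\|_{L^2(d\mu)}$ this proves \eqref{maximal estimate} for $s>\tfrac14$. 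I expect the endpoint $s=\tfrac14$ (where $\alpha>\tfrac12$, $\beta=\tfrac14=s$, so the geometric series barely diverges) to be the main obstacle: there one cannot afford the logarithmic loss of crude dyadic summation and must instead argue by restricted strong type / real interpolation in the spirit of \cite{C} and Sj\"olin--Kenig--Ruiz, transferring the sharp one-dimensional $L^q_x$ ($q>2$) maximal bound to $L^1(d\mu)$ through the localized kernel estimate of Step 2; making this transference run for the inhomogeneous $\omega$ is the crux. Finally, (ii): by (i), every $\alpha>1-2s$ belongs to the admissible set defining $\bar\alpha_{1,\mathcal B}(s)$ in \eqref{0.2}, whence $\bar\alpha_{1,\mathcal B}(s)\le1-2s$; combined with \eqref{0.3} and the routine passage from $\mathcal B_{t_k}^N$ back to $\mathcal B_t$ this gives $\alpha_{1,\mathcal B}(s)\le1-2s$. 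The reverse inequalities $\alpha_{1,\mathcal B}(s)\ge1-2s$ and $\bar\alpha_{1,\mathcal B}(s)\ge1-2s$ follow from a Dahlberg--Kenig \cite{DK} / Barcel\'o--Bennett--Carbery--Rogers \cite{BBCR} type counterexample: a lacunary superposition $\sum_jf_{R_j}$ of frequency-localized bumps at scales $R_j\to\infty$ whose Boussinesq evolutions focus on a self-similar Cantor set of Hausdorff dimension $1-2s$, the only new point being that the inhomogeneity of $\omega$ (again via $\omega(\xi)=|\xi|^2+\tfrac12+O(|\xi|^{-2})$) perturbs the underlying stationary-phase computation only by errors that are summable over the construction.
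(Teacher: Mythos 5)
Your plan for part (i) is correct in spirit for $s>\tfrac14$, but it genuinely breaks at the endpoint $s=\tfrac14$, as you acknowledge, and the fix you sketch (``restricted strong type / real interpolation transferring the sharp $L^q_x$ maximal bound'') is precisely the part that would need a real argument. The paper avoids Littlewood--Paley entirely and therefore never encounters the endpoint loss: it dualizes $\|\sup_{k,N}|\mathcal B_{t_k}^Nf|\|_{L^1(d\mu)}$ directly, applies Cauchy--Schwarz in $\xi$ with the weight $|\xi|^{-2s}$ absorbed into the kernel, and is left with a bilinear form $\iint K(x,y)\,d\mu(x)\,d\mu(y)$ whose kernel is the \emph{un-localized} oscillatory integral $\int e^{i((x-y)\xi+\tau\,|\xi|\sqrt{1+|\xi|^2})}|\xi|^{-2s}\rho_N^2\,d\xi$. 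Lemma~\ref{lemma 4.2} bounds this by $|x-y|^{-(1-2s)}$ uniformly in $N$ and $\tau$, i.e.\ you land directly on the Riesz kernel of order $1-2s$, and the hypothesis $\alpha>1-2s$ is used exactly once, in the elementary estimate $I_{1-2s}(\mu)\lesssim c_\alpha(\mu)$. The case $s=\tfrac14$ is then no different from the others: Lemma~\ref{lemma 4.2} is applied with exponent $2s=\tfrac12$ (which is the \emph{lower} end of the lemma's range $\tfrac12\le s'<1$), and the energy integral $I_{1/2}(\mu)$ converges because $\alpha>\tfrac12$. In short, working with the weighted symbol $|\xi|^{-2s}$ instead of per-block $L^2$ bounds moves the convergence burden from a divergent geometric series onto the finite energy integral, which is the standard way (cf.\ \cite{BBCR}) to hit fractal-dimension endpoints cleanly. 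If you wish to run a Littlewood--Paley version, you would need to either prove a per-block estimate with no $\lambda^0$ loss and then sum on $L^{2,\infty}$, or interpolate between a high- and low-$\alpha$ estimate; neither is necessary here.

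For part (ii) the paper is also considerably more elementary than your sketch. You propose a lacunary Dahlberg--Kenig/Cantor-set construction to get $\bar\alpha_{1,\mathcal B}(s)\ge1-2s$, but the paper gets the same inequality from a single Knapp-type pair: $\hat f=\chi_{B(0,N)}$, $d\mu=N\chi_{B(0,N^{-1})}\,dx$, $t=N^{-2}$, sending $N\to\infty$. This shows directly that \eqref{maximal estimate} must fail for $\alpha<1-2s$, hence $\bar\alpha_{1,\mathcal B}(s)\ge1-2s$; combined with the upper bound from (i) and with \eqref{0.3} one gets the two equalities stated. (The paper does not explicitly construct a function whose divergence set has dimension $\ge1-2s$; it relies on the cited \v{Z}ubrini\'{c}/BBCR framework for that direction, which is closer to what you had in mind.) Your kernel estimate $|K(x,y)|\lesssim\lambda\min\{1,(\lambda|x-y|)^{-1/2}\}$ and the observation that $\omega(\xi)=|\xi|\sqrt{1+|\xi|^2}$ is a non-degenerate perturbation of $|\xi|^2$ on $\{|\xi|\ge1\}$ are both correct and are exactly the ingredients the paper packages into Lemma~\ref{lemma 4.2}; the difference is purely in how the pieces are assembled, and the paper's assembly is what makes $s=\tfrac14$ painless.
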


While extending Theorems \ref{theorem 1.1}-\ref{theorem 1.10} to a higher dimension, we are suggested by  \cite{N} (handling the Carleson problem for certain generalized dispersive equation) to consider the radial case, thereby discovering the following assertion.

\begin{theorem}\label{theorem 1.7}
For
\begin{equation*}
\left\{
\begin{aligned}
&n\geq2;\\
&\frac{1}{4}\leq s<\frac{1}{2};\\
&2\leq q\leq\frac{2}{1-2s};\\
&x\in\mathbb R^n;\\
&f\in H^s(\mathbb{R}^n),\\
\end{aligned}
\right.
\end{equation*}
let
$$\mathcal{B}^{\ast\ast}f(x):=\sup _{t\in\mathbb{R}}\left|\mathcal{B}_tf(x)\right|.
$$
Then
\begin{align}\label{1.11}
\left(\int_{\mathbb{R}^n}\left|\mathcal{B}^{\ast\ast}f(x)\right|^q|x|^\alpha\,\textrm{d}x\right)^{\frac{1}{q}}
\lesssim\|f\|_{\dot{H}^s(\mathbb{R}^n)}\ \ \forall\ \ \text{radial}\ \ f\in H^s(\mathbb{R}^n)
\end{align}
if and only if
$$\alpha =q\Big(\frac{n}{2}-s\Big)-n.$$
Consequently, under this situation we have
\begin{align}\label{convergence}
\lim_{t\rightarrow0}\mathcal{B}_tf(x)=f(x), \ \text{a.e.}\ x\in\mathbb R^n.
\end{align}
\end{theorem}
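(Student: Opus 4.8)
The plan is to exploit the radiality of $f$ to reduce the $n$-dimensional maximal estimate \eqref{1.11} to a one-dimensional weighted estimate for an oscillatory integral operator, and then to bring in the machinery behind Theorems \ref{theorem 1.1}--\ref{theorem 1.10}. First I would write the radial function $f$ via its profile, $\hat f(\xi) = g(|\xi|)$, so that
\[
\mathcal{B}_tf(x) = c_n\int_0^\infty e^{it\rho\sqrt{1+\rho^2}}\, \widehat{d\sigma}(\rho x)\, g(\rho)\,\rho^{n-1}\,d\rho,
\]
where $\widehat{d\sigma}$ is the Fourier transform of the surface measure on $\mathbb{S}^{n-1}$. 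Using the standard asymptotics $\widehat{d\sigma}(\xi) = c|\xi|^{-(n-1)/2}\big(e^{i|\xi|}a_+(|\xi|) + e^{-i|\xi|}a_-(|\xi|)\big) + O(|\xi|^{-(n+1)/2})$ with $a_\pm$ symbol-type of order $0$, the quantity $\mathcal{B}_tf(x)$ becomes, up to lower-order tails, a sum of one-dimensional oscillatory integrals in the radial variable $r=|x|$ with phase $t\rho\sqrt{1+\rho^2}\pm r\rho$ and amplitude $\sim (r\rho)^{-(n-1)/2}g(\rho)\rho^{n-1}$. Thus the left side of \eqref{1.11} is comparable to
\[
\left(\int_0^\infty \Big(\sup_{t}\Big|\int_0^\infty e^{i(t\rho\sqrt{1+\rho^2}+r\rho)}(r\rho)^{-\frac{n-1}{2}}g(\rho)\rho^{n-1}\,d\rho\Big|\Big)^q r^{\alpha+n-1}\,dr\right)^{1/q},
\]
and similarly for the $e^{-ir\rho}$ term; the power of $r$ coming out of the amplitude recombines to give the correct weight, which fixes the relation $\alpha = q(\frac n2 - s) - n$ purely by scaling (the homogeneity check $\dot H^s$ of a radial function forces exactly this exponent). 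The necessity of $\alpha = q(\frac n2 - s)-n$ is the easy direction: test on Knapp-type radial examples or use the exact scaling invariance of $\dot H^s$ together with the dilation behaviour of the two sides.

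For the sufficiency, having reduced to one dimension, the hard part is that $\mathcal{B}_t$ is \emph{not} scaling-invariant, exactly as the authors flag in \S\ref{s12}. To circumvent this I would Littlewood--Paley decompose in $\rho$, writing $g = \sum_{j\geq 0} g_j$ with $g_j$ supported where $\rho\approx 2^j$. On each piece, $\rho\sqrt{1+\rho^2}\approx \rho^2$ (for $j\geq 1$) up to a bounded perturbation whose derivatives are controlled, so that after rescaling $\rho = 2^j\sigma$ the phase $t\rho\sqrt{1+\rho^2}$ becomes $2^{2j}t\,\sigma\sqrt{1+2^{-2j}\sigma^2} = 2^{2j}t(\sigma^2/... )$ — a smooth, uniformly (in $j$) nondegenerate perturbation of the Schr\"odinger phase $\sigma^2$. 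This is precisely where the non-scaling-invariance is absorbed: the perturbation costs only a $j$-uniform constant because its second derivative in $\sigma$ stays $\approx 1$ on the support. One then applies the sharp one-dimensional local smoothing / maximal bound for the (perturbed) Schr\"odinger-type propagator — equivalently the $L^q(r^\alpha dr)$ estimate underlying Theorem \ref{theorem 1.10}(i), in the form of the \v{Z}ubrini\'c--Barcel\'o--Bennett--Carbery--Rogers weighted estimates quoted in \S\ref{s11} — to each frequency block, with the gain $2^{-js}$ supplied by $f\in\dot H^s$, and sums a geometric series in $j$ since $s>0$ (and the range $2\le q\le 2/(1-2s)$ is exactly the range in which the per-block bound holds without loss).

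The endpoint case $s=\tfrac14$ (then $q$ ranges up to $2/(1-2s)=4$) requires the $L^4$ local smoothing estimate at the critical exponent; here I would invoke the one-dimensional $L^4$ bound of the Schr\"odinger maximal function (Carleson's argument, or the $TT^*$ / Strichartz $L^4_{t,x}$ estimate in $n=1$) and check that the bounded perturbation from $\sqrt{1+2^{-2j}\sigma^2}$ does not destroy it — a stationary-phase / van der Corput argument on the kernel suffices, since the relevant mixed derivative $\partial_\sigma\partial_t$ of the phase is bounded below. Finally, once \eqref{1.11} is established, \eqref{convergence} follows by the standard density argument: \eqref{1.11} controls the radial maximal operator $\mathcal B^{\ast\ast}$ in a weighted $L^q$ with finite weight on compact sets away from the origin, convergence holds trivially for Schwartz data, and the maximal bound upgrades it to all radial $f\in H^s$ (the region near $x=0$ and near $|x|=\infty$ being handled by the low-frequency splitting $f=f_{<1}+f_{\ge1}$ already used in \S\ref{s12} together with Sobolev embedding). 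I expect the main obstacle to be making the ``$j$-uniform perturbation'' step rigorous at the critical exponent $q=2/(1-2s)$, i.e. proving that the sharp weighted estimate for $e^{it\sqrt{-\Delta(2-\Delta)}}$ restricted to a dyadic frequency annulus holds with a constant independent of the annulus — everything else is either scaling bookkeeping or a citation to the results recalled in the introduction.
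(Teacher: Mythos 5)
Your overall outline---reduce to one dimension via the radial/Bessel asymptotics, fix the exponent $\alpha$ by scaling, prove necessity by testing on dilated bumps, and then establish a weighted one-dimensional oscillatory estimate---matches the skeleton of the paper's argument. The necessity direction is essentially identical. Where you diverge is the sufficiency step. The paper does \emph{not} Littlewood--Paley decompose or rescale; instead it reduces to $s=\tfrac14$, invokes Bessel asymptotics to split $D^{\ast}h=b_1B_1+b_2B_2+B_3$ exactly as you suggest, and then controls $B_1,B_2$ by combining a single van~der~Corput kernel bound (Lemma~\ref{lemma 4.2}, which is proved uniformly in the truncation $N$ and gives $|I(u,v)|\lesssim|u-v|^{-1/2}$) with \emph{Pitt's inequality} (Lemma~\ref{lemma 2.6}). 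Pitt's inequality is the precise tool that converts the half-order Riesz-potential bound into the weighted $L^p\to L^2$ estimate over the full range $\tfrac{2}{1+2s}\le p\le 2$; the term $B_3$ is handled by direct H\"older estimates. The two approaches share a technical heart (a van~der~Corput bound exploiting the non-vanishing second derivative of $\xi\mapsto\xi\sqrt{1+\xi^2}$), but the paper gets this in one shot without any dyadic decomposition.

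Two concrete issues with your version. First, the per-block estimate you invoke, ``the $L^q(r^\alpha\,dr)$ estimate underlying Theorem~\ref{theorem 1.10}(i), in the form of the \v{Z}ubrini\'{c}--Barcel\'{o}--Bennett--Carbery--Rogers weighted estimates,'' is a mis-citation: Theorem~\ref{theorem 1.10}(i) and \cite{BBCR} concern $L^1(d\mu)$ bounds against $\alpha$-dimensional (fractal) measures, not power-weight $L^q(|x|^\alpha dx)$ estimates, and neither is used in the paper's proof of Theorem~\ref{theorem 1.7}---the weight is carried by Pitt's inequality, which you omit entirely. Second, the step you yourself flag as the ``main obstacle,'' a $j$-uniform estimate for $e^{it\sqrt{-\Delta(2-\Delta)}}$ on a dyadic annulus, is precisely what Lemma~\ref{lemma 4.2} delivers, and it is delivered without any decomposition; your plan essentially reproves that lemma block-by-block and then must recombine blocks in a weighted $L^q$ with a weight $r^\alpha$ that does not transform simply under the rescaling $\rho\mapsto 2^j\sigma$ (rescaling $r$ as well mixes the physical-space weight across blocks). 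So the scaffolding is sound and your instinct to use van~der~Corput at the crux is correct, but as written the proposal leaves unproved its own identified key step and replaces the tool that actually handles the weight (Pitt's inequality) with an inapplicable citation; filling both gaps would lead you back to something very close to the paper's Lemma~\ref{lemma 4.2} plus Lemma~\ref{lemma 2.6} argument, without having needed the Littlewood--Paley machinery at all.
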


The rest of this paper is designed below to present a much-more-involved proof of the above three theorems according to a twofold argument style.

\section{Demonstration of Theorem \ref{theorem 1.1}}\label{s2}

\subsection{Proof of Theorem \ref{theorem 1.1} (i)}

First of all, recall the following well-known variant of van der Corput's lemma.
\begin{lemma}\label{lemma 4.1}[\cite{Stein} P. 332-334]
For $a<b$ and $I=[a,b]$ let $\Phi\in C^\infty(I)$ be real-valued and $\psi\in C^\infty(I)$.
\begin{enumerate}
  \item[\rm(i)] If $|\Phi'(x)|\geq \gamma>0\ \forall\ x\in I$ and $\Phi'$ is monotonic on $I$, then
  \begin{align*}
  \left|\int_I e^{i\Phi(x)}\psi(x)\,\textrm{d}x\right|
  \lesssim\frac{1}{\gamma}\left(\left|\psi(b)\right|+\int_I \left|\psi'(x)\right|\,\textrm{d}x\right).
  \end{align*}
  \item[\rm(ii)] If $|\Phi''(x)|\geq \gamma>0\ \forall\ x\in I$, then
  \begin{align*}
  \left|\int_I e^{i\Phi(x)}\psi(x)\,\textrm{d}x\right|
  \lesssim\frac{1}{\gamma^{\frac{1}{2}}}\left(\left|\psi(b)\right|+\int_I \left|\psi'(x)\right|\,\textrm{d}x\right).
  \end{align*}
\end{enumerate}
\end{lemma}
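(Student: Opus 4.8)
This is the classical van der Corput lemma, so I would follow the standard two-step route: first establish (i) by a single integration by parts, then deduce (ii) from (i) by splitting $I$ according to the size of $\Phi'$ and optimizing one parameter.

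For (i), the plan is to exploit the identity $e^{i\Phi}=(i\Phi')^{-1}\tfrac{d}{dx}e^{i\Phi}$, which is legitimate because $|\Phi'|\ge\gamma>0$ on $I$, and integrate by parts:
$$\int_I e^{i\Phi}\psi\,\textrm{d}x=\Big[\frac{\psi e^{i\Phi}}{i\Phi'}\Big]_a^b-\int_I e^{i\Phi}\Big(\frac{\psi}{i\Phi'}\Big)'\,\textrm{d}x,$$
where $\big(\psi/(i\Phi')\big)'=\psi'/(i\Phi')-\psi\Phi''/\big(i(\Phi')^2\big)$. I would bound the boundary term by $\gamma^{-1}\big(|\psi(a)|+|\psi(b)|\big)$ and the $\psi'$-contribution by $\gamma^{-1}\int_I|\psi'|\,\textrm{d}x$; for the remaining term I would use that the monotonicity of $\Phi'$ forces $\Phi''$ to keep a constant sign, so that $\int_I|\Phi''|/(\Phi')^2\,\textrm{d}x=|1/\Phi'(a)-1/\Phi'(b)|\le 2\gamma^{-1}$, and then bound $|\psi|$ pointwise by $\|\psi\|_{L^\infty(I)}\le|\psi(b)|+\int_I|\psi'|\,\textrm{d}x$. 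Rewriting $|\psi(a)|$ the same way yields precisely the asserted inequality.

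For (ii), I would first treat $\psi\equiv1$. Since $|\Phi''|\ge\gamma$ the function $\Phi'$ is strictly monotone, so for a parameter $\delta>0$ the set $\{x\in I:|\Phi'(x)|\le\delta\}$ is a single subinterval whose length the mean value theorem bounds by $2\delta\gamma^{-1}$, giving $\big|\int e^{i\Phi}\,\textrm{d}x\big|\le 2\delta\gamma^{-1}$ there; on the complement, which is at most two intervals, $\Phi'$ is monotone with $|\Phi'|\ge\delta$, so part (i) with $\gamma$ replaced by $\delta$ gives $\big|\int e^{i\Phi}\,\textrm{d}x\big|\lesssim\delta^{-1}$. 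Adding these and choosing $\delta=\sqrt\gamma$ yields $\big|\int_I e^{i\Phi}\,\textrm{d}x\big|\lesssim\gamma^{-1/2}$. For general $\psi$ I would introduce $G(x):=\int_a^x e^{i\Phi(t)}\,\textrm{d}t$; since the hypothesis $|\Phi''|\ge\gamma$ still holds on every $[a,x]\subseteq I$, the case just proved gives $\|G\|_{L^\infty(I)}\lesssim\gamma^{-1/2}$, and then $\int_I e^{i\Phi}\psi\,\textrm{d}x=G(b)\psi(b)-\int_I G\,\psi'\,\textrm{d}x$ finishes the proof.

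Because this is a textbook lemma I do not anticipate a real obstacle; the only points needing care are the sign/monotonicity observations (that $\Phi'$ monotone makes $\int_I|\Phi''|/(\Phi')^2\,\textrm{d}x$ telescope in (i), and that $|\Phi''|\ge\gamma$ makes the low-gradient set a single interval in (ii)), the fact that both hypotheses pass to subintervals, which is what legitimizes the auxiliary function $G$ and the splitting argument, and the minor bookkeeping that expresses $|\psi(a)|$ through $|\psi(b)|+\int_I|\psi'|\,\textrm{d}x$ so that only the endpoint $b$ appears in the final constant.
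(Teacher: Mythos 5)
Your proof is correct and follows exactly the standard argument given in the cited reference (Stein, pp.~332--334): integration by parts exploiting $e^{i\Phi}=(i\Phi')^{-1}\frac{d}{dx}e^{i\Phi}$ and the telescoping of $\int_I|\Phi''|/(\Phi')^2$ for part (i), and the split of $I$ by the level set $\{|\Phi'|\le\delta\}$ with $\delta=\sqrt\gamma$ followed by the auxiliary primitive $G$ for part (ii). The paper itself states this lemma with a citation and does not reprove it, so there is nothing further to compare.
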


Next, we need a crucial oscillatory estimate whose fractional order Schr\"odinger operator analogue was considered in Sj\"olin \cite{S2}.

\begin{lemma}\label{lemma 4.2}
Let
\begin{equation*}
\left\{
\begin{aligned}
 &\frac{1}{2}\leq s<1;\\
&0<t<\frac{1}{6};\\
&q\in C_0^\infty(\mathbb{R}).\\
\end{aligned}
\right.
\end{equation*}
Then
\begin{align*}
  \left|\int_{\mathbb{R}} e^{i(x\cdot\xi+t|\xi|\sqrt{1+|\xi|^2})}|\xi|^{-s}q\left(\frac{\xi}{N}\right)\,\textrm{d}\xi\right|
  \lesssim\frac{1}{|x|^{1-s}}
\end{align*}
for $x\in\mathbb{R}$ and $N>1$.
\end{lemma}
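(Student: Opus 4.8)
\medskip
\noindent\emph{Strategy.}\quad
The estimate is of van der Corput type, and the plan is to dissect the frequency integral according to the size of $\xi$ and of the parameters $(|x|,t)$, then invoke Lemma~\ref{lemma 4.1}. Since $\mu(\xi):=|\xi|\sqrt{1+\xi^2}$ is even, the substitution $\xi\mapsto-\xi$ on the negative half-line reduces the problem to bounding
\[
J(x):=\int_0^\infty e^{i(x\xi+t\mu(\xi))}\xi^{-s}q(\xi/N)\,\textrm{d}\xi
\]
by $|J(x)|\ls|x|^{-(1-s)}$ for every $x\in\mathbb{R}$. The phase $\Phi(\xi):=x\xi+t\mu(\xi)$ obeys
\[
\mu'(\xi)=\frac{1+2\xi^2}{\sqrt{1+\xi^2}}\approx 1+\xi,\qquad
\mu''(\xi)=\frac{\xi(3+2\xi^2)}{(1+\xi^2)^{3/2}}>0,\qquad
\mu''(\xi)\approx1\ \ (\xi\gs1),
\]
so $\Phi'$ is monotone on $(0,\infty)$ --- whence Lemma~\ref{lemma 4.1}(i) is available on every subinterval free of a zero of $\Phi'$ --- and $\Phi$ has a (unique) critical point only when $x<0$ and $|x|>t$, at $\xi_*$ with $\mu'(\xi_*)=|x|/t$, so $\xi_*\approx|x|/t$. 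The hypothesis $t<\tfrac16$ is used to keep $t\mu'$ small on bounded frequency ranges, while $s\ge\tfrac12$ is precisely what lets the second-derivative gain $t^{-1/2}$ be absorbed into $|x|^{-(1-s)}$.

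The low frequencies $0<\xi\le1$ are disposed of at once: since $s<1$, $\xi^{-s}$ is locally integrable, so the trivial bound gives $\int_0^{\min(1,|x|^{-1})}\xi^{-s}\,\textrm{d}\xi\ls|x|^{-(1-s)}$, which already settles all of $0<\xi\le1$ when $|x|\le1$; for $|x|>1$ the remaining $\int_{1/|x|}^1$ is controlled by Lemma~\ref{lemma 4.1}(i): there $|\Phi'|\ge|x|-t\mu'(1)\ge\tfrac12|x|$ by $t<\tfrac16$, and the amplitude $a:=\xi^{-s}q(\xi/N)$ satisfies $|a|\ls|x|^{s}$ and $\int_{1/|x|}^{1}|a'|\ls|x|^{s}$ on this interval, so the contribution is $\ls|x|^{-1}\cdot|x|^{s}=|x|^{-(1-s)}$.

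The substance is the high frequencies $\xi\ge1$, where I would distinguish $|x|\gs\sqrt t$ from $|x|\ls\sqrt t$. (It is convenient, though not essential, first to write $\mu(\xi)=\xi^2+g(\xi)$ on $\xi\ge1$ with $g$ smooth and bounded, $|g^{(k)}(\xi)|\ls|\xi|^{-k}$, and to absorb $e^{itg(\xi)}$ into the amplitude --- harmless since $t<1$ --- reducing matters to the parabolic phase $x\xi+t\xi^2$, of the kind treated in \cite{S2}.) If $|x|\gs\sqrt t$, the critical point $\xi_*\approx|x|/t$ sits at a scale $\gs t^{-1/2}$, i.e. at or beyond the stationary-phase scale $\approx t^{-1/2}$, and is therefore ``resolved'': after a dyadic decomposition $\xi\sim2^j$, Lemma~\ref{lemma 4.1}(ii) on the $O(1)$ blocks around $\xi_*$ contributes $\ls 2^{-sj_*}t^{-1/2}\approx t^{s-1/2}|x|^{-s}$, which is $\ls|x|^{-(1-s)}$ exactly because $|x|\gs\sqrt t$ and $s\ge\tfrac12$, while Lemma~\ref{lemma 4.1}(i) on the remaining blocks leads to convergent geometric series each $\ls|x|^{-(1-s)}$. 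If instead $|x|\ls\sqrt t$, then $|x\xi|\ls1$ and $t\xi^2\ls1$ throughout $1\le\xi\le t^{-1/2}$, so the phase is $\ls1$ there and the trivial bound already yields $\int_1^{t^{-1/2}}\xi^{-s}\,\textrm{d}\xi\ls t^{-(1-s)/2}\ls|x|^{-(1-s)}$; on $\xi\ge t^{-1/2}$ one has $|\Phi'(\xi)|\gs t\xi$, so a dyadic decomposition with Lemma~\ref{lemma 4.1}(i) gives a geometric series bounded by $t^{-(1-s)/2}\ls|x|^{-(1-s)}$. In every step the cutoff $q(\xi/N)$ enters only through symbol-type bounds on the amplitude, so all estimates are uniform in $N>1$.

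The step I expect to be the genuine obstacle is the crossover through the critical point $\xi_*\approx|x|/t$ when $|x|$ is comparable to $t$: there the naive second-derivative (stationary-phase) size $t^{-1/2}$ can exceed the target $|x|^{-(1-s)}$, and one must instead observe that in this regime the phase $x\xi+t\mu(\xi)$ does not complete even one full oscillation before $\xi\sim t^{-1/2}$, so that the trivial bound --- not van der Corput --- is the sharp one. This dichotomy $|x|\gs\sqrt t$ versus $|x|\ls\sqrt t$ has no analogue for the Schr\"odinger operator; it is exactly how the failure of parabolic scaling for the Boussinesq phase, flagged in \S\ref{s1}, manifests for this oscillatory integral. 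Handling it, along with the non-integrability of $(\xi^{-s})'$ at the origin (which forces one to excise a neighbourhood of $\xi=0$ before applying Lemma~\ref{lemma 4.1}(i)) and the uniformity in $N$, is where the care lies.
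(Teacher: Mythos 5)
Your strategy is broadly parallel to the paper's: both prove the kernel bound by van der Corput (Lemma~\ref{lemma 4.1}), both dichotomize at $|x|^2\lessgtr t$, and both exploit $s\ge\tfrac12$ together with $t\lesssim|x|^2$ to absorb the stationary-phase gain $t^{-1/2}(|x|/t)^{-s}=t^{s-1/2}|x|^{-s}\lesssim|x|^{-(1-s)}$ near the critical point $\xi_*\approx|x|/t$. These are exactly the paper's Case~1(i)/(ii) and the $B_2$ estimate. You also correctly spot the roles of $t<\tfrac16$ (to make $t\mu'(\xi)$ small on $[|x|^{-1},1]$ when $|x|>1$) and of the $N$-uniform symbol bounds.

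There is, however, a genuine gap in your treatment of the regime $\sqrt t\lesssim|x|<1$. You dispose of $0<\xi\le 1$ trivially and then, for $\xi\ge1$, propose a dyadic decomposition with Lemma~\ref{lemma 4.1}(i) on blocks away from $\xi_*$. But when $|x|<1$, the blocks $1\le 2^j\le|x|^{-1}$ all lie below $\xi_*\approx|x|/t\ge|x|^{-1}$, and on them the only available lower bound is $|\Phi'|\gtrsim|x|$, so Lemma~\ref{lemma 4.1}(i) yields $\lesssim|x|^{-1}2^{-js}$ per block. Summing over $0\le j\le\log_2|x|^{-1}$ gives $\approx|x|^{-1}$, which \emph{exceeds} the target $|x|^{-(1-s)}$ since $|x|<1$ and $s<1$; the claim ``geometric series each $\lesssim|x|^{-(1-s)}$'' fails precisely here. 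The paper avoids this by placing the trivial/van der Corput crossover at $|\xi|=|x|^{-1}$ rather than at $|\xi|=1$: on $|\xi|<|x|^{-1}$ the trivial bound already gives $|x|^{-(1-s)}$, while on $|\xi|\ge|x|^{-1}$ the amplitude satisfies $|\psi|\lesssim|x|^{s}$ and $\int|\psi'|\lesssim|x|^{s}$, so a \emph{single} application of Lemma~\ref{lemma 4.1}(i) gives $|x|^{-1}\cdot|x|^{s}=|x|^{-(1-s)}$ directly, with no dyadic sum at all. (The paper then only needs a further split $I_1,I_2,I_3$ around $\xi_*$ in Case~1(ii), and at $\xi=1$ in Case~2.) Your argument can be repaired either by extending the trivial bound to $|\xi|\le|x|^{-1}$, or by taking on each dyadic block the better of the trivial and van der Corput bounds; as written, it does not close. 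Relatedly, I would push back on the claim that the $|x|\lessgtr\sqrt t$ dichotomy ``has no analogue for the Schr\"odinger operator'': the same dichotomy appears for the phase $t|\xi|^a$ in \cite{S2}; what is special to Boussinesq is that it cannot be scaled away, not that it does not arise.
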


\begin{proof}[Proof of Lemma \ref{lemma 4.2}]
Without loss of generality, we may assume $x\neq0$.
\begin{align*}
 \int_{\mathbb{R}} e^{i(x\cdot\xi+t|\xi|\sqrt{1+|\xi|^2})}|\xi|^{-s}q\left(\frac{\xi}{N}\right)\,\textrm{d}\xi
&=\int_{|\xi|<|x|^{-1}} e^{i(x\cdot\xi+t|\xi|\sqrt{1+|\xi|^2})}|\xi|^{-s}q\left(\frac{\xi}{N}\right)\,\textrm{d}\xi\\
&\quad +\int_{|\xi|\geq|x|^{-1}} e^{i(x\cdot\xi+t|\xi|\sqrt{1+|\xi|^2})}|\xi|^{-s}q\left(\frac{\xi}{N}\right)\,\textrm{d}\xi\\
&:=A+B.
\end{align*}

It is easy to estimate
\begin{align*}
 |A|\lesssim\int_{|\xi|<|x|^{-1}}|\xi|^{-s}\,\textrm{d}\xi\thickapprox\frac{1}{|x|^{1-s}}.
\end{align*}
However, estimating $B$ is divided into two cases.
\begin{itemize}
\item[$\rhd$] {\bf Case~1}: $|x|<1$. Two subcases are considered below.
\begin{enumerate}
  \item[\rm(i)] Under $|x|^2\leq\frac{t}{2}$,
  we set
  $$\Phi(\xi):=x\cdot\xi+t\xi\sqrt{1+\xi^2}\quad \forall\xi\geq|x|^{-1}.$$ Then
 $$\Phi'(\xi)=x+t\frac{1+2\xi^2}{\sqrt{1+\xi^2}}=x\left(1+\frac{t}{x}\frac{1+2\xi^2}{\sqrt{1+\xi^2}}\right).$$
 Note that
 $$\left|1+\frac{t}{x}\frac{1+2\xi^2}{\sqrt{1+\xi^2}}\right|
 \geq\left|\frac{t}{x}\frac{1+2\xi^2}{\sqrt{1+\xi^2}}\right|-1
 \geq\frac{2|x|^2}{|x|}\frac{2\xi^2}{\sqrt{2}\xi}-1
 \geq2\sqrt{2}|x|\xi-1
 \geq1.$$
 So,
 $|\Phi'(\xi)|\geq|x|.$

Also, we have
 $$\Phi''(\xi)=t\frac{3\xi+2\xi^3}{(1+\xi^2)^{\frac{3}{2}}},
 $$
  thereby getting that $\Phi'$ is monotonic for $\xi\geq|x|^{-1}$. Upon setting
 $$\psi(\xi):=\xi^{-s}q\left(\frac{\xi}{N}\right),
 $$
  we obtain
  $$|\psi(\xi)|\lesssim|x|^{-s}\quad\forall\xi\geq|x|^{-1}.$$
 \begin{align*}
 \int_{|x|^{-1}}^\infty |\psi'(\xi)|\,\textrm{d}\xi
 &=\int_{|x|^{-1}}^\infty \left|\xi^{-s}\frac{1}{N}q'\left(\frac{\xi}{N}\right)-s\xi^{-s-1}q\left(\frac{\xi}{N}\right)\right|\,\textrm{d}\xi\\
 &\lesssim|x|^s\int_{|x|^{-1}}^\infty \frac{1}{N}\left|q'\left(\frac{\xi}{N}\right)\right|\,\textrm{d}\xi+\int_{|x|^{-1}}^\infty\xi^{-s-1}\,\textrm{d}\xi\\
 &\lesssim|x|^s.
\end{align*}
By Lemma \ref{lemma 4.1}, we then conclude that
$$
\begin{cases}\left|\int_{|x|^{-1}}^\infty e^{i(x\cdot\xi+t|\xi|\sqrt{1+|\xi|^2})}|\xi|^{-s}q\left(\frac{\xi}{N}\right)\,\textrm{d}\xi\right|
\lesssim\frac{1}{|x|}|x|^s
\thickapprox\frac{1}{|x|^{1-s}};\\
\left|\int_{-\infty}^{-|x|^{-1}} e^{i(x\cdot\xi+t|\xi|\sqrt{1+|\xi|^2})}|\xi|^{-s}q\left(\frac{\xi}{N}\right)\,\textrm{d}\xi\right|
\lesssim\frac{1}{|x|}|x|^s
\thickapprox\frac{1}{|x|^{1-s}}.
\end{cases}
$$

\item[\rm(ii)] Under $|x|^2>\frac{t}{2}$, we achieve
$$
\int_{|x|^{-1}}^\infty e^{i(x\cdot\xi+t|\xi|\sqrt{1+|\xi|^2})}|\xi|^{-s}q\left(\frac{\xi}{N}\right)\,\textrm{d}\xi
:=B_1+B_2+B_3
$$
where
$$
\begin{cases}
B_1:=\int_{I_1}e^{i(x\cdot\xi+t|\xi|\sqrt{1+|\xi|^2})}|\xi|^{-s}q\left(\frac{\xi}{N}\right)\,\textrm{d}\xi;\\
B_2:=\int_{I_2}e^{i(x\cdot\xi+t|\xi|\sqrt{1+|\xi|^2})}|\xi|^{-s}q\left(\frac{\xi}{N}\right)\,\textrm{d}\xi;\\
B_3:=\int_{I_3}e^{i(x\cdot\xi+t|\xi|\sqrt{1+|\xi|^2})}|\xi|^{-s}q\left(\frac{\xi}{N}\right)\,\textrm{d}\xi,
\end{cases}
$$
and
\begin{equation*}
\left\{
\begin{aligned}
 &I_1:=\left\{\xi\geq|x|^{-1},\xi\leq\delta\frac{|x|}{t}\right\};\\
&I_2:=\left\{\xi\geq|x|^{-1},\delta\frac{|x|}{t}\leq\xi\leq K\frac{|x|}{t}\right\};\\
&I_3:=\left\{\xi\geq|x|^{-1},\xi\geq K\frac{|x|}{t}\right\};\\
&\delta>0\ \&\ K>0\  \text{are small \& large numbers respectively}.\\
\end{aligned}
\right.
\end{equation*}

For $\xi\in I_1$ we have
$$t\frac{1+2\xi^2}{\sqrt{1+\xi^2}}\leq t\frac{3\xi^2}{\xi}\leq 3t\delta\frac{|x|}{t}\leq\frac{1}{2}|x|,$$
 whence
 $$|\Phi'(\xi)|\geq|x|-t\frac{1+2\xi^2}{\sqrt{1+\xi^2}}\geq|x|-\frac{1}{2}|x|=\frac{1}{2}|x|.$$
By Lemma \ref{lemma 4.1}, we can obtain
 $$|B_1|
 \lesssim\frac{1}{|x|^{1-s}}.$$
 Meanwhile, upon estimating
 $$t\frac{1+2\xi^2}{\sqrt{1+\xi^2}}\geq t\frac{2\xi^2}{\sqrt{2}\xi}\geq \sqrt{2}tK\frac{|x|}{t}\geq 2|x|\ \ \forall\ \
 \xi\in I_3,
 $$
 we get
 $$|\Phi'(\xi)|\geq t\frac{1+2\xi^2}{\sqrt{1+\xi^2}}-|x|\geq|x|\quad\forall \xi \in I_3.
 $$
 By Lemma \ref{lemma 4.1}, we have
  $$|B_3|\lesssim\frac{1}{|x|^{1-s}}.$$

 To estimate $I_2$, note that
 $$
 \begin{cases}\Phi''(\xi)=t\frac{3\xi+2\xi^3}{(1+\xi^2)^{\frac{3}{2}}}\geq t\frac{2\xi^3}{(2\xi^2)^{\frac{3}{2}}}=\frac{\sqrt{2}}{2}t \quad\forall \xi\in I_2;\\
 |\psi(\xi)|
 =\left|\xi^{-s}q\left(\frac{\xi}{N}\right)\right|
 \lesssim\left(\delta\frac{|x|}{t}\right)^{-s}
 \lesssim\left(\frac{|x|}{t}\right)^{-s},
 \end{cases}
 $$
 and
 \begin{align*}
 \int_{I_2}|\psi'(\xi)|\,\textrm{d}\xi
 &=\int_{I_2} \left|\xi^{-s}\frac{1}{N}q'\left(\frac{\xi}{N}\right)-s\xi^{-s-1}q\left(\frac{\xi}{N}\right)\right|\,\textrm{d}\xi\\
 &\leq\int_{I_2}|\xi|^{-s} \frac{1}{N}\left|q'\left(\frac{\xi}{N}\right)\right|\,\textrm{d}\xi+C\int_{I_2}\xi^{-s-1}\,\textrm{d}\xi\\
 &\leq\left(\delta\frac{|x|}{t}\right)^{-s}\int_{I_2}\frac{1}{N}\left|q'\left(\frac{\xi}{N}\right)\right|\,\textrm{d}\xi+C\left(\frac{|x|}{t}\right)^{-s}\\
 &\lesssim\left(\frac{|x|}{t}\right)^{-s}.
\end{align*}
So, by Lemma \ref{lemma 4.1} and $s\ge \frac12$, we obtain
 $$|B_2|
 \lesssim t^{-\frac{1}{2}}\left(\frac{|x|}{t}\right)^{-s}
 \thickapprox t^{s-\frac{1}{2}}|x|^{-s}
 \lesssim|x|^{2(s-\frac{1}{2})}|x|^{-s}
 \thickapprox\frac{1}{|x|^{1-s}},$$
 thereby finding
$$
\begin{cases}\left|\int_{|x|^{-1}}^\infty e^{i(x\cdot\xi+t|\xi|\sqrt{1+|\xi|^2})}|\xi|^{-s}q\left(\frac{\xi}{N}\right)\,\textrm{d}\xi\right|
\lesssim\frac{1}{|x|^{1-s}};\\
\left|\int_{-\infty}^{-|x|^{-1}} e^{i(x\cdot\xi+t|\xi|\sqrt{1+|\xi|^2})}|\xi|^{-s}q\left(\frac{\xi}{N}\right)\,\textrm{d}\xi\right|
\lesssim\frac{1}{|x|^{1-s}}.
\end{cases}
$$
\end{enumerate}

\item[$\rhd$] {\bf Case~2}: $|x|>1$. Upon writing
\begin{align*}
\int_{|x|^{-1}}^\infty e^{i(x\cdot\xi+t|\xi|\sqrt{1+|\xi|^2})}|\xi|^{-s}q\left(\frac{\xi}{N}\right)\,\textrm{d}\xi
&:=\int_{|x|^{-1}}^1 e^{i(x\cdot\xi+t|\xi|\sqrt{1+|\xi|^2})}|\xi|^{-s}q\left(\frac{\xi}{N}\right)\,\textrm{d}\xi\\
&\quad +\int_1^\infty e^{i(x\cdot\xi+t|\xi|\sqrt{1+|\xi|^2})}|\xi|^{-s}q\left(\frac{\xi}{N}\right)\,\textrm{d}\xi\\
&:=B_4+B_5.
\end{align*}
we utilize the technique similar to handling {\bf Case~1} to gain
$$|B_5|\lesssim\frac{1}{|x|^{1-s}}.$$

Concerning $B_4$, we estimate
$$
\begin{cases} |\Phi'(\xi)|
=\left|x+t\frac{1+2\xi^2}{\sqrt{1+\xi^2}}\right|
\geq|x|\left(1-\left|\frac{t}{x}\frac{1+2\xi^2}{\sqrt{1+\xi^2}}\right|\right)
\geq\frac{|x|}{2}\ \ \forall\ \
0<t<\frac{1}{6};\\
|\psi(\xi)|
=\left|\xi^{-s}q\left(\frac{\xi}{N}\right)\right|
\lesssim\xi^{-s}
\lesssim|x|^{s}\ \ \forall\ \ \xi>|x|^{-1},
\end{cases}
$$
and
 \begin{align*}
 \int_{|x|^{-1}}^1|\psi'(\xi)|\,\textrm{d}\xi
 &=\int_{|x|^{-1}}^1 \left|\xi^{-s}\frac{1}{N}q'\left(\frac{\xi}{N}\right)-s\xi^{-s-1}q\left(\frac{\xi}{N}\right)\right|\,\textrm{d}\xi\\
 &\leq\int_{|x|^{-1}}^1\xi^{-s} \frac{1}{N}\left|q'\left(\frac{\xi}{N}\right)\right|\,\textrm{d}\xi+C\int_{|x|^{-1}}^1\xi^{-s-1}\,\textrm{d}\xi\\
 &\lesssim|x|^{s}\int_{|x|^{-1}}^1\frac{1}{N}\left|q'\left(\frac{\xi}{N}\right)\right|\,\textrm{d}\xi+|x|^s\\
 &\lesssim|x|^s.
\end{align*}
By Lemma \ref{lemma 4.1}, we have
$$|B_4|=\left|\int_{|x|^{-1}}^1 e^{i(x\cdot\xi+t|\xi|\sqrt{1+|\xi|^2})}|\xi|^{-s}q(\frac{\xi}{N})\,\textrm{d}\xi\right| \lesssim\frac{1}{|x|^{1-s}},
$$
thereby reaching
$$
\begin{cases}\left|\int_{|x|^{-1}}^\infty e^{i(x\cdot\xi+t|\xi|\sqrt{1+|\xi|^2})}|\xi|^{-s}q\left(\frac{\xi}{N}\right)\,\textrm{d}\xi\right|
\lesssim\frac{1}{|x|^{1-s}};\\
\left|\int_{-\infty}^{-|x|^{-1}} e^{i(x\cdot\xi+t|\xi|\sqrt{1+|\xi|^2})}|\xi|^{-s}q\left(\frac{\xi}{N}\right)\,\textrm{d}\xi\right|
\lesssim\frac{1}{|x|^{1-s}}.
\end{cases}
$$
\end{itemize}
\end{proof}

Finally, we arrive at

\begin{proof}[Proof of Theorem \ref{theorem 1.1} (i)]
It is enough to prove that the following estimate
\begin{equation}\label{2.1}
\left(\int_B \left|\mathcal{B}^{\ast} f(x)\right|^2\,\textrm{d}x\right)^{\frac{1}{2}}
\lesssim\|f\|_{{\dot H}^s{(\mathbb{R})}},\quad f\in {\dot H}^s(\mathbb{R})
\end{equation}
holds for all balls $B$ in $\mathbb{R}$ due to the fact
 that \eqref{2.1} implies Theorem \ref{theorem 1.1}(i) and
 $$\mathcal{B}^{\ast}f(x):=\sup _{0<t<1}\left|\mathcal{B}_tf(x)\right|.$$

To do so, set $$Rf(x):=\phi(x)\int_\mathbb{R}e^{ix\cdot\xi}e^{it(x)|\xi|\sqrt{1+\xi^2}}|\xi|^{-s}\hat{f}(\xi)\,\textrm{d}\xi,\quad x\in\mathbb{R},\quad f\in S(\mathbb{R}),$$
where $\phi(x)$ is a real-valued function in $C_c^\infty(\mathbb{R})$, $t(x)$ is a measurable function of $x$ with $0<t(x)<1$ and $s=\frac{1}{4}$. It suffices to prove that the operator $R$ is bounded on $L^2(\mathbb{R})$.

Upon letting
$$p(x,\xi):=\phi(x)e^{it(x)|\xi|\sqrt{1+\xi^2}}|\xi|^{-s},$$
we get
$$Rf(x)=\int_\mathbb{R}e^{ix\cdot\xi}p(x,\xi)\hat{f}(\xi)\,\textrm{d}\xi.$$
Via choosing a real-valued function $\rho\in C_c^\infty(\mathbb{R})$ such that
\begin{equation*}
\left\{
\begin{aligned}
 &\rho(\xi)=
\left\{
\begin{aligned}
 &1 \quad \textrm{as} \quad |\xi|\leq 1;\\
&0 \quad \textrm{as} \quad |\xi|\geq 2; \\
\end{aligned}
\right.\\
&\rho_N(\xi):=\rho\left(\frac{\xi}{N}\right) \quad \forall N\in \mathbb{N},\\
\end{aligned}
\right.
\end{equation*}
and defining
$$
\begin{cases}
p_N(x,\xi):=\rho_N(\xi)p(x,\xi);\\ $$R_Nf(x):=\int_\mathbb{R}e^{ix\cdot\xi}p_N(x,\xi)\hat{f}(\xi)\,\textrm{d}\xi,\quad f\in S(\mathbb{R}).
\end{cases}
$$
we can readily see that $R_N$ is bounded on $L^2(\mathbb{R})$ and its adjoint operator is given by
$$R_N^\ast h(x)=\int_\mathbb{R}\int_\mathbb{R}e^{i(x-y)\cdot\xi}\overline{p_N(y,\xi)}h(y)\,\textrm{d}y\,\textrm{d}\xi,\quad h\in S(\mathbb{R}).
$$
According to Plancherel's theorem, we have
\begin{eqnarray*}
\int_{\mathbb{R}}\left|R_N^\ast h(x)\right|^2\,\textrm{d}x
&=&\int_{\mathbb{R}}\left|\widehat{R_N^\ast h}(\xi)\right|^2\,\textrm{d}\xi\\
&=&\int_{\mathbb{R}}\widehat{R_N^\ast h}(\xi)\overline{\widehat{R_N^\ast h}(\xi)}\,\textrm{d}\xi\\
&=&(2\pi)^2\int_{\mathbb{R}}\int_{\mathbb{R}}e^{-i y\cdot\xi}\overline{p_N(y,\xi)}h(y)\,\textrm{d}y\overline{\int_{\mathbb{R}}e^{-iz\cdot\xi}\overline{p_N(z,\xi)}h(z)\,\textrm{d}z}\,\textrm{d}\xi\\
&=&(2\pi)^2\int_{\mathbb{R}}\int_{\mathbb{R}}e^{-i y\cdot\xi}\overline{p_N(y,\xi)}h(y)\,\textrm{d}y\int_{\mathbb{R}}e^{iz\cdot\xi}p_N(z,\xi)\overline{h(z)}\,\textrm{d}z\,\textrm{d}\xi\\
&=&(2\pi)^2\int_{\mathbb{R}}\int_{\mathbb{R}}\int_{\mathbb{R}}e^{i[(z-y)\cdot\xi+(t(z)-t(y))|\xi|\sqrt{1+\xi^2}]}
\rho_N^2(\xi)|\xi|^{-2s}\,\textrm{d}\xi\phi(y)\phi(z)h(y)\overline{h(z)}\,\textrm{d}y\,\textrm{d}z\\
&=&(2\pi)^2\int_{\mathbb{R}}\int_{\mathbb{R}}K_N(y, z)h(y)\overline{h(z)}\,\textrm{d}y\,\textrm{d}z,
\end{eqnarray*}
where $$K_N(y, z):=\phi(y)\phi(z)\int_{\mathbb{R}}e^{i((z-y)\cdot\xi+(t(z)-t(y))|\xi|\sqrt{1+\xi^2})}
\rho_N^2(\xi)|\xi|^{-2s}\,\textrm{d}\xi\ \ \&\ \ s=\frac14.
$$
By Lemma \ref{lemma 4.2}, we have
\begin{align}\label{2.2}
|K_N(y, z)|\lesssim|\phi(y)||\phi(z)||y-z|^{-\frac{1}{2}},\quad N>1,
\end{align}
thereby getting
\begin{eqnarray*}
\int_{\mathbb{R}}\left|R_N^\ast h(x)\right|^2\,\textrm{d}x
&=&(2\pi)^2\int_{\mathbb{R}}\int_{\mathbb{R}}K_N(y, z)h(y)\overline{h(z)}\,\textrm{d}y\,\textrm{d}z\\
&\lesssim&\int_{\mathbb{R}}\int_{\mathbb{R}}\left|y-z\right|^{-\frac{1}{2}}\left|\phi(y)\phi(z)\right|\left|h(y)h(z)\right|\,\textrm{d}y\,\textrm{d}z\\
&\thickapprox&\int_{\mathbb{R}}\int_{\mathbb{R}}\frac{\left|\phi(z)h(z)\right|}{\left|y-z\right|^{\frac{1}{2}}}\,\textrm{d}z\left|\phi(y)h(y)\right|\,\textrm{d}y\\
&\thickapprox&\int_{\mathbb{R}}I_{\frac{1}{2}}\left(\left|\phi h\right|\right)(y)\left|\phi(y)h(y)\right|\,\textrm{d}y\\
&\lesssim&\|I_{\frac{1}{2}}(|\phi h|)\|_{L^4(\mathbb{R})}\|\phi h\|_{L^{\frac{4}{3}}(\mathbb{R})}\\
&\lesssim&\|\phi h\|^2_{L^{\frac{4}{3}}(\mathbb{R})}\\
&\lesssim&\|\phi\|^2_{L^4(\mathbb{R})}\|h\|^2_{L^2(\mathbb{R})}\\
&\lesssim&\|h\|^2_{L^2(\mathbb{R})},
\end{eqnarray*}
where $I_{\frac{1}{2}}$ denotes the Riesz potential operator of order $\frac{1}{2}$ which is bounded from $L^{\frac{4}{3}}(\mathbb{R})$ to $L^4(\mathbb{R})$, and H\"{o}lder's inequality has been utilized two times. Accordingly, $R_N$ is uniformly bounded on $L^2(\mathbb{R})$. Since
$$Rf(x)=\lim_{N\rightarrow\infty}R_Nf(x),\quad f\in S(\mathbb{R}),$$
 we use Fatou's lemma  to get
$$\int_{\mathbb{R}}|Rf(x)|^2\,\textrm{d}x
\leq\underline{\lim}_{N\rightarrow\infty}\int_{\mathbb{R}}|R_Nf(x)|^2\,\textrm{d}x
\lesssim\|f\|^2_{L^2(\mathbb{R})},
$$
thereby establishing $L^2(\mathbb{R})$-boundedness of $R$.
\end{proof}
\subsection{Proof of Theorem \ref{theorem 1.1} (ii)}

To verify Theorem \ref{theorem 1.1} (ii), we need four lemmas.

Firstly, we are about to use the fact that the dispersive waves with different frequencies transport at different velocities - more clearly - we take
$$
\begin{cases} g\in S(\mathbb{R});\\
$$\text{supp} \check{g}\subset(-1,1);\\
 \check{g}(0)\neq0;\\
 f_v(x):=e^{-\frac{ix}{v^2}}\check{g}\left(\frac{x}{v}\right),\quad 0<v<1.
 \end{cases}
 $$
Note that
$$\text{supp} f_v\subset(-v, v)\ \ \&\ \ f_v\in S(\mathbb{R}).$$
So, we have
\begin{lemma}\label{lemma 2.1}
(\cite{DK})
If $$0<s<\frac{1}{4}\ \ \&\ \ 0<v<1,$$
then $$\widehat{f_v}(\xi)=v g\left(v\xi+\frac{1}{v}\right)\ \ \&\ \
\|f_v\|_{H^s(\mathbb{R})}\lesssim v^{\frac{1}{2}-2s}.$$
\end{lemma}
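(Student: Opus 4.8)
The plan is to reduce everything to two short Fourier-analytic computations, essentially as in \cite{DK}: first I would pin down $\widehat{f_v}$ explicitly, and then estimate the weighted $L^2$ integral defining $\|f_v\|_{H^s(\mathbb{R})}$.

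For the first step I would write $f_v(x)=e^{-ix/v^2}\,\check g(x/v)$ and apply the dilation and modulation rules for the Fourier transform. Using the normalization fixed in \S\ref{s11} (so that $\widehat{\check g}=g$), the change of variables $y=x/v$ gives $\widehat{\check g(\cdot/v)}(\xi)=v\,g(v\xi)$, while the modulation factor $e^{-ix/v^2}$ shifts the frequency variable by $v^{-2}$; composing the two produces
\begin{align*}
\widehat{f_v}(\xi)=v\,g\big(v(\xi+v^{-2})\big)=v\,g\big(v\xi+v^{-1}\big),
\end{align*}
which is the first assertion.

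For the second step I would insert this formula into the definition of the $H^s$ norm and substitute $\eta=v\xi+v^{-1}$ (so $\xi=\eta v^{-1}-v^{-2}$ and $\textrm{d}\xi=v^{-1}\,\textrm{d}\eta$), turning the norm into $v\int_{\mathbb{R}}\big(1+\left|\eta v^{-1}-v^{-2}\right|^2\big)^s\left|g(\eta)\right|^2\,\textrm{d}\eta$. Since $0<v<1$ one has $\left|\eta v^{-1}-v^{-2}\right|=v^{-2}\left|v\eta-1\right|\leq v^{-2}(1+|\eta|)$, hence $\big(1+\left|\eta v^{-1}-v^{-2}\right|^2\big)^s\lesssim v^{-4s}(1+|\eta|)^{2s}$ (here $s>0$ is used), and since $g\in S(\mathbb{R})$ the integral $\int_{\mathbb{R}}(1+|\eta|)^{2s}|g(\eta)|^2\,\textrm{d}\eta$ is a finite constant; therefore $\|f_v\|_{H^s(\mathbb{R})}^2\lesssim v\cdot v^{-4s}=v^{1-4s}$, i.e. $\|f_v\|_{H^s(\mathbb{R})}\lesssim v^{\frac12-2s}$.

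I do not expect a genuine obstacle here: the only points needing care are tracking the Fourier normalization constants through the dilation/modulation identities so that the prefactor comes out exactly as $v$, and observing that the hypothesis $v<1$ is precisely what lets the constant $1$ be absorbed into the $v^{-2}$-sized term in the weight. The restriction $0<s<\frac14$ plays no role in the estimate itself; it only guarantees that $\frac12-2s>0$, which is what makes $f_v$ an effective building block for the counterexample in Theorem \ref{theorem 1.1}(ii).
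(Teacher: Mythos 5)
Your argument is correct and is exactly the standard computation that the paper delegates to \cite{DK} without reproducing: the dilation/modulation identity gives $\widehat{f_v}(\xi)=v\,g(v\xi+v^{-1})$, and the change of variables $\eta=v\xi+v^{-1}$ together with the bound $1+|\eta v^{-1}-v^{-2}|^2\lesssim v^{-4}(1+|\eta|)^2$ (valid because $0<v<1$) yields $\|f_v\|_{H^s}^2\lesssim v^{1-4s}$. Your closing remark about the role of $s<\tfrac14$ is also accurate — the estimate itself only needs $s>0$, and the restriction matters later for summability of the $v_k^{1/2-2s}$. Nothing to correct.
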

Secondly, this last lemma, along with Heisenberg's inequality, leads to
\begin{lemma}\label{lemma 2.2}
For $$\Phi(\xi):=|\xi|\sqrt{1+\xi^2},$$ there exists a triple $\{c_0>0,\delta>0,v_0>0\}$ such that $$\left|\mathcal{B}_t f_v(x)\right|\geq c_0$$
holds
for any triple $\{v,x,t\}$ with
\begin{equation*}
\left\{
\begin{aligned}
 &0<v<v_0;\\
&0<x<\delta;\\
&t=\frac{x}{\Phi'\left(\frac{1}{v^2}\right)}.\\
\end{aligned}
\right.
\end{equation*}
\end{lemma}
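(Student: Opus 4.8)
The plan is to feed the formula $\widehat{f_v}(\xi)=vg(v\xi+v^{-1})$ from Lemma \ref{lemma 2.1} into the definition of $\mathcal{B}_tf_v(x)$ and change variables $\eta:=v\xi+v^{-1}$; the factor $v$ in the amplitude then cancels the Jacobian, leaving
$$\mathcal{B}_tf_v(x)=\frac{1}{2\pi}\int_{\mathbb{R}}e^{i\left(x\xi(\eta)+t\Phi(\xi(\eta))\right)}g(\eta)\,\textrm{d}\eta,\qquad \xi(\eta):=\frac{\eta}{v}-\frac{1}{v^2}.$$
Next I would Taylor–expand $\Phi$ about the frequency $\xi(0)=-v^{-2}$ around which $\widehat{f_v}$ concentrates, $\Phi(\xi(\eta))=\Phi(-v^{-2})+\Phi'(-v^{-2})\tfrac{\eta}{v}+R(\eta,v)$ with $R$ the second–order remainder, so that the total phase reads
$$-\frac{x}{v^2}+t\Phi(-v^{-2})+\frac{\eta}{v}\bigl(x+t\Phi'(-v^{-2})\bigr)+tR(\eta,v).$$
Because $\Phi$ is even, $\Phi'(-v^{-2})=-\Phi'(v^{-2})$, and the choice $t=x/\Phi'(v^{-2})$ is exactly what annihilates the linear-in-$\eta$ term. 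Hence $\mathcal{B}_tf_v(x)=(2\pi)^{-1}e^{i\Theta}\int_{\mathbb{R}}e^{itR(\eta,v)}g(\eta)\,\textrm{d}\eta$ for a real $\Theta=\Theta(x,v,t)$, and since $\int_{\mathbb{R}}g(\eta)\,\textrm{d}\eta$ is a nonzero multiple of $\check g(0)$, it suffices to make $\int_{\mathbb{R}}|e^{itR(\eta,v)}-1|\,|g(\eta)|\,\textrm{d}\eta$ small relative to $|\check g(0)|$; then one simply reads off $c_0\approx\tfrac12|\check g(0)|$.

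The remaining work is to control $tR(\eta,v)$. From $\Phi(\xi)=|\xi|\sqrt{1+\xi^2}$ one gets $\Phi''(\xi)=|\xi|(3+2\xi^2)(1+\xi^2)^{-3/2}$, so $|\Phi''|\le3$ wherever defined, while $\Phi'(v^{-2})=(1+2v^{-4})(1+v^{-4})^{-1/2}\gtrsim v^{-2}$ for $0<v\le1$; thus by the Lagrange form of the remainder $|tR(\eta,v)|\lesssim t\,v^{-2}\eta^2\lesssim x\,\eta^2\lesssim\delta\,\eta^2$ — but this bound is legitimate only while the segment joining $-v^{-2}$ to $\xi(\eta)$ stays away from the corner of $\Phi$ at the origin, i.e.\ while $|\eta|<v^{-1}$. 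I would therefore split the $\eta$–integral at a fixed level $|\eta|=L$: on $|\eta|\le L$, after imposing $v<1/L$, the estimate gives $|tR|\lesssim\delta L^2$ and so $\int_{|\eta|\le L}|e^{itR}-1|\,|g|\lesssim\delta L^2\|g\|_{L^1}$; on $|\eta|>L$ one uses only $|e^{itR}-1|\le2$ together with the Schwartz decay of $g$, so that $2\int_{|\eta|>L}|g|\to0$ as $L\to\infty$. Choosing the constants in the order $L$ large (depending only on $\check g(0)$ and $\|g\|_{L^1}$), then $v_0\le\min\{1,1/L\}$, then $\delta$ small, forces both pieces below $\tfrac{\pi}{2}|\check g(0)|$, which yields $|\mathcal{B}_tf_v(x)|\ge\tfrac12|\check g(0)|=:c_0$ uniformly over the stated triples $\{v,x,t\}$.

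The main obstacle is precisely this last point: the Taylor remainder is not uniformly small over the whole $\eta$–line, only on a bounded window, because after rescaling the relevant frequency block sits at height $-v^{-2}$ with width $\sim v^{-1}$ and only the smooth part of $\Phi$ is felt there as long as $|\eta|\lesssim v^{-1}$. Taming the complementary region requires the rapid decay of $g$ — a manifestation of $f_v$ being a minimal–uncertainty wave packet, which is where Heisenberg's inequality enters — and couples the parameters through $v_0\lesssim1/L$; once this bookkeeping is in place, the rest is the routine van der Corput–free estimate above. It is also worth recording the sanity check $t=x/\Phi'(v^{-2})\lesssim\delta v^2\to0$ as $v\to0$, so that the triple stays in the regime needed by the subsequent lemmas.
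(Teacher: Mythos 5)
Your proposal is correct and follows essentially the same route as the paper's proof: change variables to move the window to $\eta$, Taylor-expand $\Phi$ about the concentration frequency $\pm v^{-2}$, cancel the linear-in-$\eta$ phase by the choice $t=x/\Phi'(v^{-2})$, split the $\eta$-integral at a fixed level $L$ to exploit the decay of $g$ on the tail, and take $\delta$ small last so that the residual phase on $|\eta|\le L$ is negligible. The only cosmetic differences are that you bound the remainder with a second-order Lagrange estimate via a global bound on $\Phi''$ on $(-\infty,0)$ (the paper instead writes out the quadratic term plus an $O(\cdot)$ cubic piece), and that you flag explicitly the constraint $|\eta|<v^{-1}$ needed to keep the Taylor segment away from the corner of $\Phi$ at the origin — a point the paper enforces implicitly through $v_0=\tfrac{1}{2L}$.
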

\begin{proof}[Proof of Lemma \ref{lemma 2.2}] Firstly,
since
$$\check{g}(0)\neq 0
,\Rightarrow \int_\mathbb{R} g(\xi)\,\textrm{d}\xi\neq 0,$$
we choose a large number L such that
$$\int_{|\xi|\geq L}\left|g(\xi)\right|\,\textrm{d}\xi\leq \frac{1}{100}\left|\int_\mathbb{R} g(\xi)\,\textrm{d}\xi\right|,$$
thereby evaluating
\begin{align*}
\mathcal{B}_tf_v(x)
&=\int_\mathbb{R} e^{i(x\cdot\xi+t\Phi(\xi))}\widehat{f_v}(\xi)\,\textrm{d}\xi\\
&=\int_\mathbb{R} e^{i(x\cdot\xi+t\Phi(\xi))}v g\left(v\xi+\frac{1}{v}\right)\,\textrm{d}\xi\\
&=\int_\mathbb{R} e^{i\left(\frac{x}{v}\left(\xi-\frac{1}{v}\right)+t\Phi\left(\frac{1}{v}\left|\xi-\frac{1}{v}\right|\right)\right)}g(\xi)\,\textrm{d}\xi\\
&:=\int_\mathbb{R} e^{iF}g(\xi)\,\textrm{d}\xi,
\end{align*}
where $$F(x, t,\xi,v):=\frac{x}{v}\left(\xi-\frac{1}{v}\right)+t\Phi\left(\frac{1}{v}\left|\xi-\frac{1}{v}\right|\right).$$
Via choosing $v_0:=\frac{1}{2L}$, we get that if $0<v<v_0$ then
\begin{align*}
\left|\mathcal{B}_tf_v(x)\right|
&\geq\left|\int_{|\xi|\leq L} e^{iF}g(\xi)\,\textrm{d}\xi\right|-\left|\int_{|\xi|\geq L} e^{iF}g(\xi)\,\textrm{d}\xi\right|\\
&\geq\left|\int_{|\xi|\leq L} e^{iF}g(\xi)\,\textrm{d}\xi\right|-\int_{|\xi|\geq L} \left|g(\xi)\right|\,\textrm{d}\xi\\
&\geq\left|\int_{|\xi|\leq L} e^{iF}g(\xi)\,\textrm{d}\xi\right|-\frac{1}{100}\left|\int_\mathbb{R} g(\xi)\,\textrm{d}\xi\right|.
\end{align*}

Next we consider
$$\left|\int_{|\xi|\leq L} e^{iF}g(\xi)\,\textrm{d}\xi\right|.$$
With the help of Taylor's expansion we have
\begin{align*}
\Phi\left(\frac{1}{v}\left|\xi-\frac{1}{v}\right|\right)
&=\Phi\left(\frac{1}{v^2}-\frac{\xi}{v}\right)\\
&=\Phi\left(\frac{1}{v^2}\right)-\frac{\xi}{v}\Phi'\left(\frac{1}{v^2}\right)+\frac{\xi^2}{v^2}\frac{\Phi''\left(\frac{1}{v^2}\right)}{2}+O\left(-\frac{\xi}{v}\right)^3,
\end{align*}
whence
$$F(x, t,\xi,v)=\frac{x\cdot\xi}{v}-\frac{x}{v^2}+t\Phi\left(\frac{1}{v^2}\right)
-t\frac{\xi}{v}\Phi'\left(\frac{1}{v^2}\right)+t\frac{\xi^2}{v^2}\frac{\Phi''\left(\frac{1}{v^2}\right)}{2}+O\left[t\left(-\frac{\xi}{v}\right)^3\right].$$
An application of
$$t:={\frac{x}{\Phi'\left(\frac{1}{v^2}\right)}}$$
yields
$$F(x, t,\xi,v)=-\frac{x}{v^2}+\frac{x\Phi\left(\frac{1}{v^2}\right)}{\Phi'\left(\frac{1}{v^2}\right)}
+\frac{x}{\Phi'\left(\frac{1}{v^2}\right)}\frac{\Phi''\left(\frac{1}{v^2}\right)}{2}\frac{\xi^2}{v^2}
+O\left(-\frac{x}{\Phi'\left(\frac{1}{v^2}\right)}\frac{\xi^3}{v^3}\right).$$
Therefore, for small $\delta>0$ we have
\begin{align*}
\left|\int_{|\xi|\leq L} e^{iF}g(\xi)\,\textrm{d}\xi\right|
&=\left|\int_{-L}^L e^{i\left(\frac{x}{\Phi'\left(\frac{1}{v^2}\right)}\frac{\Phi''\left(\frac{1}{v^2}\right)}{2}\frac{\xi^2}{v^2}\right)}
 e^{iO\left(-\frac{x}{\Phi'\left(\frac{1}{v^2}\right)}\frac{\xi^3}{v^3}\right)} g(\xi)\,\textrm{d}\xi\right|\\
&= \left|\int_{-L}^L  e^{i\left(\frac{x}{\Phi'\left(\frac{1}{v^2}\right)}\frac{\Phi''\left(\frac{1}{v^2}\right)}{2}\frac{\xi^2}{v^2}\right)}g(\xi)\,\textrm{d}\xi
+\int_{-L}^L  e^{i\left(\frac{x}{\Phi'\left(\frac{1}{v^2}\right)}\frac{\Phi''\left(\frac{1}{v^2}\right)}{2}\frac{\xi^2}{v^2}\right)}
 \left[e^{iO\left(-\frac{x}{\Phi'\left(\frac{1}{v^2}\right)}\frac{\xi^3}{v^3}\right)}-1\right] g(\xi)\,\textrm{d}\xi\right|\\
&\gtrsim\left|\int_{-L}^L  e^{i\left(\frac{x}{\Phi'\left(\frac{1}{v^2}\right)}\frac{\Phi''\left(\frac{1}{v^2}\right)}{2}\frac{\xi^2}{v^2}\right)}g(\xi)\,\textrm{d}\xi\right|-x\\
&\geq \frac{1}{2}\left|\int_{-L}^L g(\xi)\,\textrm{d}\xi\right| \quad \forall x\in(0,\delta).
\end{align*}

Finally, if $$0<v<v_0\ \ \&\ \ 0<x<\delta,
$$
then
\begin{align*}
\left|\mathcal{B}_tf_v(x)\right|
&\geq\frac{1}{2}\left|\int_{-L}^L g(\xi)\,\textrm{d}\xi\right|-\frac{1}{100}\left|\int_\mathbb{R} g(\xi)\,\textrm{d}\xi\right|\\
&\geq\frac{1}{2}\left|\int_\mathbb{R} g(\xi)\,\textrm{d}\xi\right|-\frac{1}{200}\left|\int_\mathbb{R} g(\xi)\,\textrm{d}\xi\right|
-\frac{1}{100}\left|\int_\mathbb{R} g(\xi)\,\textrm{d}\xi\right|\\
&\geq\frac{1}{4}\left|\int_\mathbb{R} g(\xi)\,\textrm{d}\xi\right|,
\end{align*}
as desired.
\end{proof}

Thirdly, although the following lemma is known as the dispersion estimate, we will use it to check the wave with high frequency spread so fast such that in the test interval $\frac{\delta}{2}<x<\delta$, the wave is already gone at the given time.

\begin{lemma}\label{lemma 2.3}
If
\begin{equation*}
\left\{
\begin{aligned}
 &0<v<\min\left\{v_0,\frac{\delta}{4}\right\};\\
&0<t<1;\\
&\frac{\delta}{2}<x<\delta,\\
\end{aligned}
\right.
\end{equation*}
then
$$|\mathcal{B}_tf_v(x)|
\lesssim\frac{v}{t^{\frac{1}{2}}}.$$
\end{lemma}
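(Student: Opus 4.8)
The plan is to use Lemma \ref{lemma 2.1} to recast $\mathcal{B}_tf_v(x)$ as an oscillatory integral whose amplitude is the fixed Schwartz function $g$, and then to apply the second-derivative van der Corput estimate (Lemma \ref{lemma 4.1}(ii)) on the frequency region where the Boussinesq phase is genuinely non-degenerate, disposing of the complementary region by the rapid decay of $g$. By Lemma \ref{lemma 2.1},
$$
\mathcal{B}_tf_v(x)=\int_{\mathbb{R}}e^{i(x\cdot\xi+t\Phi(\xi))}\,v\,g\!\left(v\xi+\frac1v\right)\textrm{d}\xi,\qquad \Phi(\xi)=|\xi|\sqrt{1+\xi^2},
$$
and I would split this integral as $\mathrm{I}:=\int_{|\xi|<1}$ plus $\mathrm{II}:=\int_{|\xi|\geq1}$.

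For $\mathrm{I}$, observe that when $|\xi|<1$ (and $v$ is small, as it is here) the argument of $g$ satisfies $\big|v\xi+\frac1v\big|\geq\frac1v-v\geq\frac1{2v}$, which blows up as $v\to0$; hence the Schwartz decay of $g$ gives $\big|g(v\xi+\frac1v)\big|\lesssim_M v^M$ for every $M\in\mathbb{N}$, so that $|\mathrm{I}|\lesssim v\cdot v^M\lesssim v$, and since $0<t<1$ this is $\lesssim v/t^{1/2}$. (This part, and in fact the whole bound, is uniform in $x$; the hypotheses $\frac{\delta}{2}<x<\delta$ and $v<\delta/4$ only serve to fix the regime in which this estimate is later combined with Lemma \ref{lemma 2.2}.)

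For $\mathrm{II}=\int_1^\infty+\int_{-\infty}^{-1}$ I would treat the two half-lines symmetrically. Since $\Phi''(\xi)=\frac{3\xi+2\xi^3}{(1+\xi^2)^{3/2}}$ for $\xi>0$ and $\Phi''$ is even, the phase $\xi\mapsto x\xi+t\Phi(\xi)$ is smooth on each of $[1,\infty)$ and $(-\infty,-1]$ with
$$
\big|\partial_\xi^2\big(x\xi+t\Phi(\xi)\big)\big|=t\,\Phi''(\xi)\geq t\,\frac{2|\xi|^3}{(2\xi^2)^{3/2}}=\frac{t}{\sqrt2}\qquad\forall\ |\xi|\geq1.
$$
Taking the amplitude $\psi(\xi):=v\,g(v\xi+\frac1v)$ we have $\|\psi\|_{L^\infty(\mathbb{R})}\lesssim v$ and, after the change of variable $\eta=v\xi+\frac1v$, $\int_{\mathbb{R}}|\psi'(\xi)|\,\textrm{d}\xi=v\|g'\|_{L^1(\mathbb{R})}\lesssim v$; in particular $\psi$ and $\psi'$ are integrable and $\psi$ vanishes at infinity, so applying Lemma \ref{lemma 4.1}(ii) on $[1,R]$ and on $[-R,-1]$ and letting $R\to\infty$ yields
$$
|\mathrm{II}|\lesssim\left(\frac{t}{\sqrt2}\right)^{-1/2}\!\left(\|\psi\|_{L^\infty(\mathbb{R})}+\int_{\mathbb{R}}|\psi'|\,\textrm{d}\xi\right)\lesssim\frac{v}{t^{1/2}}.
$$
Combining with the bound for $\mathrm{I}$ gives $|\mathcal{B}_tf_v(x)|\leq|\mathrm{I}|+|\mathrm{II}|\lesssim v/t^{1/2}$, as desired.

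The only delicate point — and hence the main, rather mild, obstacle — is that $g$ is merely Schwartz and not compactly supported, so Lemma \ref{lemma 4.1}(ii) cannot be applied directly on all of $\mathbb{R}$ because $\Phi''$ degenerates at $\xi=0$. This is exactly why the low-frequency window $|\xi|<1$ is peeled off first: there the amplitude $v\,g(v\xi+\frac1v)$ is automatically super-polynomially small in $v$, since the argument of $g$ sits near $\frac1v$, so that piece costs only $O(v)$ and needs no cancellation. The passage from the bounded intervals in Lemma \ref{lemma 4.1} to the half-lines $[1,\infty)$ and $(-\infty,-1]$ is the standard truncate-and-take-limits argument, using the integrability of $\psi$ and $\psi'$ and the decay of $\psi$ at infinity.
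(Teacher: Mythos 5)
Your proof is correct, and it takes a genuinely different (and in some ways cleaner) route than the paper's. The paper argues by a dispersive kernel estimate: it sets
$\hat m(y)=\int_{\mathbb R}e^{it(|\xi|\sqrt{1+\xi^2}-y\xi/t)}\,\textrm{d}\xi$,
splits the $\xi$-integral at $|\xi|=1$, uses stationary phase (with the same lower bound $\Phi''(\xi)\gtrsim1$ for $|\xi|>1$) to get the uniform-in-$y$ bound $|\hat m(y)|\lesssim t^{-1/2}$, and then concludes by $|\mathcal B_tf_v(x)|\le\|\hat m\|_{L^\infty}\|f_v\|_{L^1}\lesssim t^{-1/2}v$. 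You instead keep the initial data inside the oscillatory integral, take the amplitude $\psi(\xi)=v\,g(v\xi+1/v)$ coming from Lemma \ref{lemma 2.1}, peel off $|\xi|<1$ using the super-polynomial smallness of $g$ near $1/v$, and apply the van der Corput second-derivative test directly on the two half-lines where $\Phi$ is smooth, with $\|\psi\|_\infty\lesssim v$ and $\|\psi'\|_{L^1}\lesssim v$ supplying the $v$-gain. Both arguments rest on exactly the same quantitative input, namely $\Phi''\gtrsim1$ for $|\xi|\geq1$; the paper's kernel version is the more standard and would extend to general initial data with $\|f\|_{L^1}$ controlled, while your version is shorter and avoids the slightly murky stationary-point computation in the paper (the $\xi_0\approx1/v^2$ digression there is a remnant of Lemma \ref{lemma 2.2} and is not actually needed for the uniform dispersive bound). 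Your observation that the conclusion holds uniformly in $x$, so that the hypotheses $\tfrac\delta2<x<\delta$ and $v<\delta/4$ are not used in either proof and are only there to fix the regime for the counterexample, is accurate and worth noting.
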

\begin{proof}[Proof of Lemma \ref{lemma 2.3}]
Let
\begin{equation*}
\left\{
\begin{aligned}
 &m(\xi):=e^{it|\xi|\sqrt{1+\xi^2}};\\
&\hat{m}(y):=\int_{\mathbb{R}}e^{it(|\xi|\sqrt{1+\xi^2}-\frac{y\cdot\xi}{t})}\,\textrm{d}\xi.\\
\end{aligned}
\right.
\end{equation*}
Then two situations are handled below.

\begin{enumerate}
  \item[$\rhd$] Under $|\xi|\leq1$, it is easy to see that $$|\hat{m}(y)|\lesssim1\lesssim t^{-\frac{1}{2}}.$$
  \item[$\rhd$] Under $|\xi|>1$, we set
  $$\Psi(\xi):=|\xi|\sqrt{1+\xi^2}-\frac{y\cdot\xi}{t}.$$
If $\xi\geq0$, then
$$\Psi'(\xi)=\frac{1+2\xi^2}{(1+\xi^2)^{\frac{1}{2}}}-\frac{y}{t}~\ \ \&\ \  ~\Psi''(\xi)=\frac{3\xi+2\xi^3}{(1+\xi^2)^{\frac{3}{2}}}.$$
Upon letting
$$\Psi'(\xi_0)=0= \frac{1+2\xi_0^2}{(1+\xi_0^2)^{\frac{1}{2}}}-\frac{y}{t},$$
 we get
 $$
 \begin{cases} t=\frac{y}{\Phi'(\xi_0)}=\frac{x}{\Phi'\left(\frac{1}{v^2}\right)};\\
\xi_0\thickapprox\frac{1}{v^2};\\
\left|\Psi''(\xi_0)\right|\thickapprox\left|\Psi''\left(\frac{1}{v^2}\right)\right|\gtrsim1.
\end{cases}
$$
According to stationary phase method again, we can obtain
$$|\hat{m}(y)|\lesssim\frac{1}{\sqrt{t\Psi''(\xi_0)}}\lesssim t^{-\frac{1}{2}},
$$
whence
\begin{align*}
\left|\mathcal{B}_tf_v(x)\right|
&=\left|\int_{\mathbb{R}} e^{i(x\cdot\xi+t\Phi(\xi))}\widehat{f_v}(\xi)\,\textrm{d}\xi\right|\\
&=\left|\int_{\mathbb{R}} \hat{m}(y)f_v(x-y)\,\textrm{d}y\right|\\
&\leq\int_{\mathbb{R}} \left|\hat{m}(y)\right|\left|f_v(x-y)\right|\,\textrm{d}y\\
&\lesssim\int_{\mathbb{R}} \frac{1}{t^{\frac{1}{2}}}\left|f_v(y)\right|\,\textrm{d}y\\
&\thickapprox\frac{1}{t^{\frac{1}{2}}} \int_{\mathbb{R}} \left|e^{-i\frac{y}{v^2}}\check{g}\left(\frac{y}{v}\right)\right|\,\textrm{d}y\\
&\thickapprox\frac{1}{t^{\frac{1}{2}}} v\int_{\mathbb{R}}\left|\check{g}(s)\right|\,\textrm{d}s\\
&\lesssim\frac{v}{t^{\frac{1}{2}}}.
\end{align*}
\end{enumerate}
\end{proof}
Fourthly, the coming-up-next lemma will be used to check the wave with low frequency which cannot arrive at the test interval $\frac{\delta}{2}<x<\delta$ at given time.
\begin{lemma}\label{lemma 2.4}
If
\begin{equation*}
\left\{
\begin{aligned}
 &0<v<\min\left\{v_0,\frac{\delta}{4}\right\};\\
&0<t<1;\\
&\frac{\delta}{2}<x<\delta,\\
\end{aligned}
\right.
\end{equation*}
then
$$|\mathcal{B}_tf_v(x)|\lesssim\frac{t}{v^4}.$$
\end{lemma}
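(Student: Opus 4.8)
The plan is to exploit that, under the stated hypotheses, the test interval $\big(\tfrac{\delta}{2},\delta\big)$ lies outside $\supp f_v$; this lets us compare $\mathcal{B}_tf_v$ with $f_v$ (which vanishes there) and thereby replace the unimodular factor $e^{it\Phi(\xi)}$ by $e^{it\Phi(\xi)}-1$, at the profit of one power of $t$.

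First I would note that $0<v<\tfrac{\delta}{4}$ forces $\supp f_v\subset(-v,v)\subset\big(-\tfrac{\delta}{4},\tfrac{\delta}{4}\big)$, which is disjoint from $\big(\tfrac{\delta}{2},\delta\big)$; hence $f_v(x)=0$ for $\tfrac{\delta}{2}<x<\delta$. Writing $\mathcal{B}_tf_v(x)=\mathcal{B}_tf_v(x)-f_v(x)$ and using the Fourier representation of $\mathcal{B}_tf_v$ from the proof of Lemma \ref{lemma 2.2}, I would obtain, for such $x$,
\begin{align*}
\mathcal{B}_tf_v(x)=\int_{\mathbb{R}}e^{ix\cdot\xi}\big(e^{it\Phi(\xi)}-1\big)\widehat{f_v}(\xi)\,\textrm{d}\xi.
\end{align*}
The elementary bounds $|e^{i\theta}-1|\le|\theta|$ and $0\le\Phi(\xi)=|\xi|\sqrt{1+\xi^2}\lesssim 1+\xi^2$ then reduce the lemma to the moment estimate $\int_{\mathbb{R}}(1+\xi^2)\,|\widehat{f_v}(\xi)|\,\textrm{d}\xi\lesssim v^{-4}$.

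To finish this I would invoke Lemma \ref{lemma 2.1} to write $\widehat{f_v}(\xi)=vg\big(v\xi+\tfrac1v\big)$, substitute $\eta=v\xi+\tfrac1v$ (so that $\xi=v^{-2}(v\eta-1)$ and $\textrm{d}\xi=v^{-1}\,\textrm{d}\eta$), and use $(v\eta-1)^2\lesssim v^2\eta^2+1$ to bound the integral by $\|g\|_{L^1(\mathbb{R})}+v^{-2}\|\eta^2 g\|_{L^1(\mathbb{R})}+v^{-4}\|g\|_{L^1(\mathbb{R})}$. Since $g\in S(\mathbb{R})$ all three quantities are finite, and since $0<v<1$ the last one dominates; combining with the previous step yields $|\mathcal{B}_tf_v(x)|\lesssim tv^{-4}$.

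The step that calls for a little care --- rather than a genuine obstacle --- is the bookkeeping of the Schwartz tails of $\widehat{f_v}$: although $\Phi$ grows quadratically in $\xi$, the rapid decay of $g$ keeps every relevant moment finite, and the resulting power $v^{-4}$ is exactly what the concentration of $\widehat{f_v}$ near $\xi\approx-v^{-2}$ (where $\Phi(\xi)\approx v^{-4}$) predicts. Everything else is a routine one-dimensional computation; I would only stress that the vanishing of $f_v$ on $\big(\tfrac{\delta}{2},\delta\big)$ is essential here, since the trivial bound $|\mathcal{B}_tf_v(x)|\le\|\widehat{f_v}\|_{L^1(\mathbb{R})}\lesssim 1$ is useless when $t$ is small.
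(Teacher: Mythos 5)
Your proposal is correct and follows essentially the same route as the paper: both exploit the vanishing of $f_v$ on $(\delta/2,\delta)$ to replace $e^{it\Phi(\xi)}$ by $e^{it\Phi(\xi)}-1$, then bound $|e^{it\Phi}-1|\le t\,\Phi(\xi)$ and pass to the $\eta=v\xi+\tfrac1v$ variable via Lemma \ref{lemma 2.1} to estimate moments of $g$. The only cosmetic difference is that you bound $\Phi(\xi)\lesssim 1+\xi^2$ uniformly, while the paper splits the integral into $|\xi|\le1$ (contributing $t/v^2$) and $|\xi|>1$ (contributing $t/v^4$) before the same change of variable; the resulting estimate is identical.
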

\begin{proof}[Proof of Lemma \ref{lemma 2.4}]
According to the definition of $f_v(x)$, we have
\begin{align*}
\mathcal{B}_tf_v(x)
&=\int_{\mathbb{R}} e^{ix\cdot\xi}e^{it|\xi|\sqrt{1+\xi^2}}\widehat{f_v}(\xi)\,\textrm{d}\xi\\
&=\int_{\mathbb{R}} \left(e^{it|\xi|\sqrt{1+\xi^2}}-1\right) e^{ix\cdot\xi}\widehat{f_v}(\xi)\,\textrm{d}\xi
+2\pi f_v(x),
\end{align*}
thereby getting $f_v(x)=0$ due to
\begin{equation*}
\left\{
\begin{aligned}
 &\text{supp} f_v\subset(-v, v);\\
&0<v<\frac{\delta}{4};\\
&\frac{\delta}{2}<x<\delta.\\
\end{aligned}
\right.
\end{equation*}
Consequently,
\begin{align*}
\left|\mathcal{B}_tf_v(x)\right|
&=\left|\int_{\mathbb{R}} \left(e^{it|\xi|\sqrt{1+\xi^2}}-1\right) e^{ix\cdot\xi}\widehat{f_v}(\xi)\,\textrm{d}\xi\right|\\
&\lesssim\int_{|\xi|\leq1} t|\xi|\sqrt{1+\xi^2}\left|\widehat{f_v}(\xi)\right|\,\textrm{d}\xi
+\int_{|\xi|>1} t|\xi|\sqrt{1+\xi^2}\left|\widehat{f_v}(\xi)\right|\,\textrm{d}\xi\\
&:=I_{1}+I_{2}.
\end{align*}
On the one hand, we have
\begin{align*}
I_{1}
&\lesssim\int_{|\xi|\leq1} t|\xi|\left|v g\left(v\xi+\frac{1}{v}\right)\right|\,\textrm{d}\xi\\
&\lesssim\frac{t}{v}\int_{\mathbb{R}}\left|\eta-\frac{1}{v}\right|\left|g(\eta)\right|\,\textrm{d}\eta\\
&\lesssim\frac{t}{v}\int_{\mathbb{R}}\left(|\eta|+\frac{1}{v}\right)\left|g(\eta)\right|\,\textrm{d}\eta\\
&\lesssim\frac{t}{v^2}.
\end{align*}
On the other hand, we have
\begin{align*}
I_{2}
&\lesssim\int_{|\xi|>1} t|\xi|^2\left|v g\left(v\xi+\frac{1}{v}\right)\right|\,\textrm{d}\xi\\
&\lesssim\frac{t}{v^2}\int_{\mathbb{R}}\left|\eta-\frac{1}{v}\right|^2|g(\eta)|\,\textrm{d}\eta\\
&\lesssim\frac{t}{v^2}\int_{\mathbb{R}}\left(|\eta|^2+\frac{1}{v^2}\right)|g(\eta)|\,\textrm{d}\eta\\
&\lesssim\frac{t}{v^4}.
\end{align*}
Combining the  estimate of $I_{1}$ with $I_{2}$, we have
$$|\mathcal{B}_tf_v(x)|\lesssim\frac{t}{v^4}.$$
\end{proof}

With the aid of the previous four lemmas, we come to
\begin{proof}[Proof of Theorem \ref{theorem 1.1}(\textrm{ii})]
Let
\begin{equation*}
\left\{
\begin{aligned}
&0<v_1<\min\left\{v_0,\frac{\delta}{4}\right\};\\
 &\varepsilon_k:=2^{-k} \quad \textrm{for} \quad k=1,2,3\cdot\cdots;\\
&v_k:=\varepsilon_kv_{k-1}^2 \quad \textrm{for} \quad k=2,3,4\cdot\cdots\\
\end{aligned}
\right.
\end{equation*}
By induction, we have
$$
\begin{cases} 0<v_k<1\ \ \text{for}\ \  k=1,2,3\cdot\cdots;\\
 0<v_k\leq\varepsilon_k\ \ \text{for}\ \  k=1,2,3\cdot\cdots;\\
v_k\leq \varepsilon_kv_{k-1}\leq\frac{1}{2}v_{k-1}
\ \ \text{for}\ \  k=2,3,4\cdots;\\
\sum_{j=k+1}^\infty v_j\lesssim v_{k+1};\\
\sum_{j=1}^{k-1}\frac{1}{{v^4_j}}\lesssim \frac{1}{{v^4_{k-1}}}.
\end{cases}
$$
Upon defining
$$f:=\sum_{k=1}^\infty f_{v_k}.$$
and using Lemma \ref{lemma 2.1}, we obtain
$$\|f\|_{H^s(\mathbb{R})}\leq\sum_{k=1}^\infty\|f_{v_k}\|_{H^s(\mathbb{R})}\lesssim\sum_{k=1}^\infty{v_k}^{\frac{1}{2}-2s}
\leq\sum_{k=1}^\infty2^{-k(\frac{1}{2}-2s)}<\infty
\quad \forall\ \  0<s<\frac{1}{4}.$$
Also, via utilizing
$$\text{supp} f\subseteq\left(-\frac{\delta}{4},\frac{\delta}{4}\right)$$
and $t(x)$ in Lemma \ref{lemma 2.2}:
$$t(x)=\frac{x}{\Phi'\left(\frac{1}{v^2}\right)}=\frac{xv^2\sqrt{v^4+1}}{v^4+2},
$$
we achieve
\begin{align*}
\left|\mathcal{B}_{t_k(x)}f(x)\right|
&=\left|\sum_{j=1}^\infty \mathcal{B}_{t_k(x)}f_{v_j}(x)\right|\\
&\geq\left|\mathcal{B}_{t_k(x)}f_{v_k}(x)\right|-\left|\sum_{j=1}^{k-1} \mathcal{B}_{t_k(x)}f_{v_j}(x)\right|-\left|\sum_{j=k+1}^\infty \mathcal{B}_{t_k(x)}f_{v_j}(x)\right|\\
&\geq c_0-\left|\sum_{j=1}^{k-1} \mathcal{B}_{t_k(x)}f_{v_j}(x)\right|-\left|\sum_{j=k+1}^\infty \mathcal{B}_{t_k(x)}f_{v_j}(x)\right|,
\end{align*}
and then estimate the last two sums under $\frac{\delta}{2}<x<\delta$.
\begin{itemize}
	\item [$\rhd$] For $1\leq j\leq k-1$, by Lemma \ref{lemma 2.4}, we get
\begin{align*}
\left|\sum_{j=1}^{k-1} \mathcal{B}_{t_k(x)}f_{v_j}(x)\right|
&\lesssim\sum_{j=1}^{k-1} \frac{t_k(x)}{v_j^4}\\
&\thickapprox\frac{xv^2_k\sqrt{v^4_k+1}}{v^4_k+2}\frac{1}{v_{k-1}^4}\\
&\lesssim\frac{(\varepsilon_k v^2_{k-1})^2\sqrt{(\varepsilon_k v^2_{k-1})^4+1}}{(\varepsilon_k v^2_{k-1})^4+2}\frac{1}{v_{k-1}^4}\\
&\rightarrow0 \quad \textrm{as} \quad k\rightarrow\infty.
\end{align*}

	\item [$\rhd$] For $j\geq k+1$, by Lemma \ref{lemma 2.3}, we have
\begin{align*}
\left|\sum_{j=k+1}^{\infty} \mathcal{B}_{t_k(x)}f_{v_j}(x)\right|
&\lesssim\sum_{j=k+1}^{\infty} \frac{v_j}{t^{\frac{1}{2}}_k(x)}\\
&\lesssim\frac{1}{t^{\frac{1}{2}}_k(x)}v_{k+1}\\
&\lesssim\left(\frac{v_k^4+2}{v_k^2\sqrt{v_k^4+1}}\right)^{\frac{1}{2}}\varepsilon_{k+1}v_k^2\\
&\thickapprox\frac{v_k\sqrt{v_k^4+2}}{\sqrt[4]{v_k^4+1}}\varepsilon_{k+1}\\
&\rightarrow0 \quad \textrm{as} \quad k\rightarrow\infty.
\end{align*}
\end{itemize}
Accordingly, for $k\geq k_0$, we can get
$$|\mathcal{B}_{t_k(x)}f(x)|\geq\frac{c_0}{2} \quad  \textrm{for} \quad \frac{\delta}{2}<x<\delta.$$
\end{proof}

\section{Demonstration of Theorem \ref{theorem 1.10}}\label{s3}

\subsection{Proof of Theorem \ref{theorem 1.10}(i)}\label{s31}

This amounts to verifying  \eqref{maximal estimate}.

\begin{proof} [Proof of Theorem \ref{theorem 1.10}(i)]
 It is enough to deal with $\frac{1}{4}\leq s<\frac{1}{2}$ since the case $s=\frac{1}{2}$ follows as a consequence.
The $\alpha$-energy of $\mu$ is defined by
$$I_\alpha(\mu):=\int_{\mathbb{B}}\int_{\mathbb{B}}\frac{1}{|x-y|^\alpha}\,\textrm{d}\mu(x)\,\textrm{d}\mu(y).$$
From a dyadic decomposition it follows that if
$$\mu\in M^\alpha(\mathbb{B})~\ \ \&\ \  \alpha>1-2s$$
then
\begin{align*}
I_{1-2s}(\mu)
&=\int_{\mathbb{B}}\int_{\mathbb{B}}\frac{1}{|x-y|^{1-2s}}\,\textrm{d}\mu(x)\,\textrm{d}\mu(y)\\
&=\int_{\mathbb{B}}\sum_{j=0}^\infty\int_{B(y,2^{-j})\backslash B(y,2^{-j-1})}\frac{1}{|x-y|^{1-2s}}\,\textrm{d}\mu(x)\,\textrm{d}\mu(y)\\
&\leq\int_{\mathbb{B}}\sum_{j=0}^\infty\int_{B(y,2^{-j})}\frac{1}{2^{(-j-1)(1-2s)}}\,\textrm{d}\mu(x)\,\textrm{d}\mu(y)\\
&\leq\int_{\mathbb{B}}\sum_{j=0}^\infty2^{(j+1)(1-2s)}\mu(B(y,2^{-j}))\,\textrm{d}\mu(y)\\
&\leq\int_{\mathbb{B}}\sum_{j=0}^\infty2^{(j+1)(1-2s)}c_\alpha(\mu) 2^{-j\alpha}\,\textrm{d}\mu(y)\\
&\leq\int_{\mathbb{B}}c_\alpha(\mu) 2^{1-2s}\sum_{j=0}^\infty2^{j(1-2s-\alpha)}\,\textrm{d}\mu(y)\\
&\lesssim c_\alpha(\mu).
\end{align*}
Accordingly, it suffices to prove that
\begin{align}\label{4.1}
\left\|\sup_{(k,N)\in\mathbb N^2}\left|\mathcal{B}_{t_k}^Nf\right|\right\|_{L^1(d\mu)}\lesssim\sqrt{I_{1-2s}(\mu)}\|f\|_{\dot{H}^s(\mathbb{R})},
\end{align}
equivalently,
\begin{align}\label{4.2}
\left|\int_{\mathbb{B}}\mathcal{B}_{t(x)}^{N(x)}f(x)\omega(x)\,\textrm{d}\mu(x)\right|^2\lesssim I_{1-2s}(\mu)\|f\|^2_{\dot{H}^s(\mathbb{R})},
\end{align}
holds uniformly in the measurable functions
$$
\begin{cases} t(x): \mathbb{B}\rightarrow\mathbb{R};\\ N(x): \mathbb{B}\rightarrow[1,\infty);\\
\omega(x): \mathbb{B}\rightarrow\mathbb{S}^1=\{-1,1\}.
\end{cases}$$
Per applying Fubini's theorem and H\"{o}lder's inequality we can get
\begin{align*}
&\left|\int_{\mathbb{B}}\mathcal{B}_{t(x)}^{N(x)}f(x)\omega(x)\,\textrm{d}\mu(x)\right|^2\\
&\ \ \leq\int_{\mathbb{R}}|\hat{f}(\xi)|^2|\xi|^{2s}\,\textrm{d}\xi
\int_{\mathbb{R}}\left|\int_{\mathbb{B}}\psi\left(\frac{|\xi|}{N(x)}\right)e^{i(x\cdot\xi+t(x)|\xi|\sqrt{1+|\xi|^2})}\omega(x)\,\textrm{d}\mu(x)\right|^2
\frac{1}{|\xi|^{2s}}\,\textrm{d}\xi\\
&\ \ =\|f\|^2_{\dot{H}^s(\mathbb{R})}\int_{\mathbb{B}}\int_{\mathbb{B}}\int_{\mathbb{R}}\psi\left(\frac{|\xi|}{N(x)}\right)\psi\left(\frac{|\xi|}{N(y)}\right)
e^{i((x-y)\cdot\xi-(t(x)-t(y))|\xi|\sqrt{1+|\xi|^2})}\frac{1}{|\xi|^{2s}}\,\textrm{d}\xi\\
&\quad \times\omega(x)\omega(y)\,\textrm{d}\mu(x)\,\textrm{d}\mu(y).
\end{align*}
In order to prove  \eqref{4.2} it suffices to show that
\begin{align}\label{4.3}
&\int\int\int_{\mathbb{B}\times\mathbb{B}\times\mathbb{R}}\left(\cdot\cdot\cdot\right)\\
& \ \ :=\int_{\mathbb{B}}\int_{\mathbb{B}}\int_{\mathbb{R}}\psi\left(\frac{|\xi|}{N(x)}\right)\psi\left(\frac{|\xi|}{N(y)}\right)
e^{i((x-y)\cdot\xi-(t(x)-t(y))|\xi|\sqrt{1+|\xi|^2})}\frac{1}{|\xi|^{2s}}\,\textrm{d}\xi
\omega(x)\omega(y)\,\textrm{d}\mu(x)\,\textrm{d}\mu(y)\nonumber \\
&\ \ \lesssim I_{1-2s}(\mu)\nonumber
\end{align}
holds uniformly in the above-defined functions $t,N,\omega$.
By Lemma \ref{lemma 4.2}, we get
\begin{align*}
\int\int\int_{\mathbb{B}\times\mathbb{B}\times\mathbb{R}}\left(\cdot\cdot\cdot\right)& \lesssim\int_{\mathbb{B}}\int_{\mathbb{B}}\frac{1}{|x-y|^{1-2s}}\left|\omega(x)\omega(y)\right|\,\textrm{d}\mu(x)\,\textrm{d}\mu(y)\\
& \thickapprox\int_{\mathbb{B}}\int_{\mathbb{B}}\frac{1}{|x-y|^{1-2s}}\,\textrm{d}\mu(x)\,\textrm{d}\mu(y)\\
&\thickapprox I_{1-2s}(\mu),
\end{align*}
thereby reaching \eqref{4.3}.
\end{proof}

\subsection{Proof of Theorem \ref{theorem 1.10}(ii)}\label{s32}

This consists of two-side-inequalities.

\begin{proof}[Proof of Theorem \ref{theorem 1.10}(ii)]
From \eqref{maximal estimate} and the definition of $
\bar\alpha_{1,\mathcal{B}}(s)$, we have
$$\bar\alpha_{1,\mathcal{B}}(s)\leq1-2s.$$

Next we show
$$\bar\alpha_{1,\mathcal{B}}(s)\ge 1-2s.$$
To do so, consider
\begin{equation*}
\left\{
\begin{aligned}
 &\hat{f}=\chi_A;\\
&\,\textrm{d}\mu(x)=N^n\chi_E(x)\,\textrm{d}x;\\
&A=B(0,N);\\
&E=B(0,N^{-1}),\\
\end{aligned}
\right.
\end{equation*}
Clearly, we have
$$\mathcal{B}_t^Nf(x)=(2\pi)^{-n}\int_{B(0,N)}\psi\left(\frac{|\xi|}{N}\right)e^{i(x\cdot\xi+t|\xi|\sqrt{1+|\xi|^2})}\,\textrm{d}\xi.$$
\begin{itemize}
\item [$\rhd$] On the one hand, upon choosing $t=N^{-2}$, we see that the phase is close to zero for all $x\in E$, thereby deriving
\begin{align*}
\left\|\sup_{0<t<1}\left|\mathcal{B}_{t}^Nf(x)\right|\right\|_{L^1(\,\textrm{d}\mu)}
&=\left\|\sup_{0<t<1}\left|(2\pi)^{-n}\int_{B(0,N)}\psi\left(\frac{|\xi|}{N}\right)e^{i(x\cdot\xi+t|\xi|\sqrt{1+|\xi|^2})}\,\textrm{d}\xi\right|\right\|_{L^1(\,\textrm{d}\mu)}\\
&\gtrsim \left\|\int_{B(0,N))}\psi\left(\frac{|\xi|}{N}\right)\,\textrm{d}\xi\right\|_{L^1(\,\textrm{d}\mu)}\\
&\thickapprox\left\|\int_{B(0,N))}\psi\left(\frac{|\xi|}{N}\right)\,\textrm{d}\xi\right\|_{L^1(N^n\chi_E(x)\,\textrm{d}x)}\\
&\gtrsim N^n|A||E|\\
&\thickapprox N^n.
\end{align*}
\item[$\rhd$] On the other hand, we calculate
\begin{align*}
\sqrt{c_\alpha(\mu)}\|f\|_{H^s(\mathbb{R}^n)}
&=\sqrt{\sup_{(x,r)\in\mathbb R^n\times(0,\infty)}r^{-\alpha}{\mu\big(B(x,r)\big)}}\left(\int_{B(0,N))}\left(1+|\xi|^2\right)^s\,\textrm{d}\xi\right)^{\frac{1}{2}}\\
&\leq\sqrt{\sup_{(x,r)\in\mathbb R^n\times(0,\infty)}r^{-\alpha}{N^n|B(0,N^{-1})\cap B(x, r)|}}(1+N^2)^{\frac{s}{2}}N^{\frac{n}{2}}\\
&\lesssim\sqrt{\frac{N^n N^{-n}}{N^{-\alpha}}}N^{s+\frac{n}{2}}\\
&\lesssim N^{\frac{\alpha}{2}}N^{s+\frac{n}{2}}.
\end{align*}
\end{itemize}
Via letting $N\rightarrow\infty$, we get
$$\bar{\alpha}_{n,\mathcal{B}}(s)\geq n-2s,$$
thereby taking $n=1$ to reveal
$$\bar{\alpha}_{1,\mathcal{B}}(s)\geq 1-2s.$$
\end{proof}

\section{Demonstration of Theorem \ref{theorem 1.7}}\label{s4}

\subsection{Proof of Theorem \ref{theorem 1.7} (if)}\label{s41}
In order to prove the if-part, we recall two lemmas.
\begin{lemma}\label{lemma 2.5}
(\cite{SW} - Bessel's formula) If
$J_m(r)$ is the Bessel function defined by
$$J_m(r)=\frac{(\frac{r}{2})^m}{\Gamma(m+\frac{1}{2})\pi^{\frac{1}{2}}}\int_{-1}^1 e^{i rx}(1-x^2)^{m-\frac{1}{2}}\,\textrm{d}x\quad\text{as}\ \  m>-\frac{1}{2},
$$
then
$$J_m(r)=\sqrt{\frac{2}{\pi r}}\cos\left(r-\frac{\pi m}{2}-\frac{\pi}{4}\right)+O(r^{-\frac{3}{2}})\quad \text{as}\ \ r\rightarrow\infty.
$$
\end{lemma}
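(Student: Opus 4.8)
The plan is to read the asymptotics of $J_m(r)$ off the integral representation by a contour deformation, since for large $r>0$ the oscillatory integral
$$I(r):=\int_{-1}^1 e^{irx}(1-x^2)^{m-1/2}\,\textrm{d}x$$
is controlled by the endpoint behaviour at $x=\pm1$. Recall $J_m(r)=\frac{(r/2)^m}{\Gamma(m+1/2)\pi^{1/2}}\,I(r)$, and we may take $r\to+\infty$.

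First I would note that $z\mapsto(1-z^2)^{m-1/2}$, taken with the principal branch, extends holomorphically to $\mathbb{C}\setminus\big((-\infty,-1]\cup[1,\infty)\big)$, whereas $e^{irz}$ decays as $\IM z\to+\infty$. So I would deform $[-1,1]$ into the upper half-plane, replacing it by the vertical ray upward from $-1$, a horizontal segment at height $T$, and the vertical ray downward to $1$, inserting a small semicircular indentation of radius $\rho$ around each of $\pm1$; letting $T\to\infty$ removes the horizontal segment (bounded by $e^{-rT}T^{C}$), and letting $\rho\to0$ removes the indentations (bounded by $\rho^{\,m+1/2}\to0$, which uses $m>-\tfrac12$). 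With $x=-1+iy$ and $x=1+iy$ on the two rays and the correct branch this yields
\[
I(r)=ie^{-ir}\!\int_0^\infty\! e^{-ry}(iy)^{m-1/2}(2-iy)^{m-1/2}\,\textrm{d}y\;-\;ie^{ir}\!\int_0^\infty\! e^{-ry}(-iy)^{m-1/2}(2+iy)^{m-1/2}\,\textrm{d}y,
\]
and the two terms on the right are complex conjugates of each other, so $I(r)=2\,\Re\Big[ie^{-ir}\int_0^\infty e^{-ry}(iy)^{m-1/2}(2-iy)^{m-1/2}\,\textrm{d}y\Big]$.

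The remaining integral now carries a \emph{real} exponential weight, so I would apply the ordinary Watson lemma: writing $(iy)^{m-1/2}=e^{i\pi(m-1/2)/2}y^{m-1/2}$ and $(2-iy)^{m-1/2}=2^{m-1/2}+O(y)$ near $y=0$,
\[
\int_0^\infty e^{-ry}(iy)^{m-1/2}(2-iy)^{m-1/2}\,\textrm{d}y=e^{i\pi(m-1/2)/2}\Big[2^{m-1/2}\Gamma\big(m+\tfrac12\big)r^{-(m+1/2)}+O\big(r^{-(m+3/2)}\big)\Big].
\]
Since $ie^{-ir}e^{i\pi(m-1/2)/2}=e^{i(-r+\pi m/2+\pi/4)}$, taking real parts gives $I(r)=2^{m+1/2}\Gamma\big(m+\tfrac12\big)\,r^{-(m+1/2)}\cos\big(r-\tfrac{\pi m}{2}-\tfrac{\pi}{4}\big)+O\big(r^{-(m+3/2)}\big)$. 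Multiplying by $\frac{(r/2)^m}{\Gamma(m+1/2)\pi^{1/2}}$, the factors $2^{\pm m}$, $r^{\pm m}$ and $\Gamma(m+\tfrac12)$ cancel and the error turns into $r^{m}O(r^{-(m+3/2)})=O(r^{-3/2})$, producing exactly $\sqrt{\tfrac{2}{\pi r}}\cos\big(r-\tfrac{\pi m}{2}-\tfrac{\pi}{4}\big)+O(r^{-3/2})$.

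I expect the real work to be the deformation step: justifying the passage across the integrable branch-point singularities at $x=\pm1$ (the indentation-and-limit argument, uniformly in $m>-\tfrac12$) and, above all, tracking the branch of $(1-z^2)^{m-1/2}$ along the deformed contour so as to land on the phase $e^{-i\pi(m-1/2)/2}$ — an error of $\pm\pi/2$ here would corrupt the $\cos(r-\tfrac{\pi m}{2}-\tfrac{\pi}{4})$. The Watson-lemma step and the final algebraic simplification are routine. (An alternative is a smooth partition of unity on $[-1,1]$ isolating $x=\pm1$ together with a Fourier-side Watson/Erd\'elyi lemma; but that route has to cope with the non-absolute convergence of $\int_0^\infty e^{-irs}s^{m-1/2}\,\textrm{d}s$ once $m\ge\tfrac12$, so I would prefer the contour deformation above.)
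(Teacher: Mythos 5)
The paper states this lemma as a citation to Stein--Weiss \cite{SW} and supplies no proof, so there is no in-paper argument to measure you against. Your proposal is correct and is essentially the standard Hankel-contour derivation: the factorisation $1-(-1+iy)^2=iy(2-iy)$ with the principal branch yields $(iy)^{m-1/2}=y^{m-1/2}e^{i\pi(m-1/2)/2}$; the horizontal segment at height $T$ and the semicircular indentations at $\pm1$ vanish using only $r>0$ and $m>-\tfrac12$; the two vertical-ray contributions are indeed complex conjugates; Watson's lemma on $\int_0^\infty e^{-ry}y^{m-1/2}(2-iy)^{m-1/2}\,\mathrm{d}y$ gives $\Gamma(m+\tfrac12)\,2^{m-1/2}r^{-(m+1/2)}+O(r^{-(m+3/2)})$; and the phase $ie^{-ir}e^{i\pi(m-1/2)/2}=e^{i(-r+\pi m/2+\pi/4)}$, after taking twice the real part, produces exactly $\cos\bigl(r-\tfrac{\pi m}{2}-\tfrac{\pi}{4}\bigr)$. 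The subsequent cancellation of $2^{\pm m}$, $r^{\pm m}$, and $\Gamma(m+\tfrac12)$ and the resulting $O(r^{-3/2})$ error term are right. In short, you have filled in the proof the paper outsources to \cite{SW}, rather than taking a genuinely different route.
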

\begin{lemma}\label{lemma 2.6}
(\cite{M} - Pitt's inequality) If
\begin{equation*}
\left\{
\begin{aligned}
 &r\geq p;\\
&0\leq\alpha<\frac{1}{r};\\
&0\leq\alpha_1<1-\frac{1}{p};\\
&\alpha_1=\alpha+1-\frac{1}{r}-\frac{1}{p},\\
\end{aligned}
\right.
\end{equation*}
then
$$\left(\int_\mathbb{R}\left|\hat{f}(\xi)\right|^r|\xi|^{-\alpha r}\,\textrm{d}\xi\right)^{\frac{1}{r}}
\lesssim\left(\int_\mathbb{R}\left|f(x)\right|^p|x|^{\alpha_1 p}\,\textrm{d}x\right)^{\frac{1}{p}}.$$
\end{lemma}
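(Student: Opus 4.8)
The plan is to recognize Lemma~\ref{lemma 2.6} as the classical inequality of Pitt (cf.~\cite{M}) and to recover it by a weighted interpolation argument. First I would reduce, by density, to $f\in S(\mathbb{R})$, and note that the relation $\alpha_1=\alpha+1-\frac1r-\frac1p$ is precisely the scaling identity forced by the dilation $f\mapsto f(\lambda x)$, so it is automatically in force; given it, the two side conditions $\alpha<\frac1r$ and $\alpha_1<1-\frac1p$ coincide, while $\alpha_1\geq0$ reads $\alpha\geq\frac1r+\frac1p-1$. Hence the set of admissible triples $\left(\frac1p,\frac1r,\alpha\right)$ is the interior of an explicit polytope. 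I would also record the self-duality of the problem: by weighted-$L^p$ duality and the fact that the adjoint of the Fourier transform is the inverse Fourier transform (up to reflection), the estimate for $(p,r,\alpha,\alpha_1)$ is equivalent to the one for $(r',p',\alpha_1,\alpha)$; in particular every case with $r<2$ is dual to a case with exponent larger than $2$, so it suffices to treat $r\geq2$.

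Next I would establish the inequality at a spanning family of base exponents. The central one is the weighted $L^2$ bound $\left\|\,|\xi|^{-\alpha}\hat f\,\right\|_{L^2}\lesssim\left\|\,|x|^{\alpha}f\,\right\|_{L^2}$ for $0\leq\alpha<\frac12$ (the case $p=r=2$, where $\alpha_1=\alpha$): since the Fourier transform of $|x|^{-\alpha}$ on $\mathbb{R}$ is a constant multiple of $|\xi|^{\alpha-1}$, one has $\hat f=c\,|\xi|^{\alpha-1}\ast\widehat{|x|^{\alpha}f}$, the convolution being the Riesz potential $I_\alpha$ applied to $g:=\widehat{|x|^{\alpha}f}$; the estimate then reduces to the Herbst (Stein--Weiss) bound $\left\|\,|\xi|^{-\alpha}I_\alpha g\,\right\|_{L^2}\lesssim\|g\|_{L^2}$ combined with Plancherel's identity $\|g\|_{L^2}=c\,\left\|\,|x|^\alpha f\,\right\|_{L^2}$ (the case $\alpha=0$ being Plancherel itself). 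The remaining base cases are Hausdorff--Young $\|\hat f\|_{L^{p'}}\lesssim\|f\|_{L^p}$ ($\alpha=\alpha_1=0$, $r=p'$, $1<p\leq2$) and the two ``Hardy--Littlewood'' faces on which one power weight degenerates, i.e.\ $\left\|\,|\xi|^{-\alpha}\hat f\,\right\|_{L^r}\lesssim\|f\|_{L^p}$ with $\alpha=\frac1r+\frac1p-1$ (the face $\alpha_1=0$) and its dual $(\alpha=0)$; these I would derive from the Hausdorff--Young--Paley inequality, namely by real interpolation in Lorentz spaces between Hausdorff--Young and the trivial bound $\|\hat f\|_{L^\infty}\leq\|f\|_{L^1}$.

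Finally I would fill the open admissible polytope: every admissible triple lies in the convex hull of the base families, so Stein--Weiss complex interpolation --- carried out with the two power measures $|x|^{\alpha_1 p}\,\textrm{d}x$ and $|\xi|^{-\alpha r}\,\textrm{d}\xi$ varying together along the analytic family --- yields the inequality of Lemma~\ref{lemma 2.6} in the whole range, the part $r<2$ following from the duality noted above. The hard part will be the interpolation bookkeeping rather than any single estimate: the admissible polytope abuts degenerate boundary pieces ($\alpha\uparrow\frac1r$, $p\downarrow1$) on which the inequality genuinely fails, so one must interpolate strictly within its interior and verify with care that the chosen base estimates, together with the duality symmetry, actually span it; everything else is routine.
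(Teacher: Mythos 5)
The paper offers no proof of this lemma at all: it is quoted verbatim as a known classical result with a pointer to Muckenhoupt \cite{M}, so there is nothing internal to compare your argument against. Your reconstruction is nevertheless a correct and standard route to Pitt's inequality, and the key structural observations check out: the relation $\alpha_1=\alpha+1-\frac1r-\frac1p$ is indeed the dilation-invariance condition, under it the two strict upper bounds on $\alpha$ and $\alpha_1$ are equivalent, the duality sends $(p,r,\alpha,\alpha_1)$ to $(r',p',\alpha_1,\alpha)$ as you say, and the closed admissible region in $(\frac1p,\frac1r,\alpha)$ is exactly the tetrahedron with vertices $(0,0,0)$, $(1,0,0)$, $(1,1,1)$, $(\frac12,\frac12,0)$, whose two faces $\{\alpha_1=0\}$ and $\{\alpha=0\}$ (your Hardy--Littlewood faces) already have the full tetrahedron as their convex hull -- so Stein--Weiss interpolation with change of power weights, applied strictly inside the relative interiors, does fill the region, and the $L^2$ diagonal and Hausdorff--Young edge are not even needed as independent inputs. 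Two points deserve more care than your sketch gives them: the identity $\hat f=c\,|\xi|^{\alpha-1}\ast\widehat{|x|^{\alpha}f}$ is a distributional convolution (since $|x|^{-\alpha}$ is only a tempered distribution and $|x|^{\alpha}f$ is not Schwartz) and needs a short justification before the Herbst bound can be invoked; and the O'Neil/Lorentz--H\"older step producing the face $\{\alpha_1=0\}$ uses $r\geq p$ in the second Lorentz index, which should be stated. Note also that your route differs from the cited source: Muckenhoupt obtains the inequality as a special case of a general weighted-norm characterization via rearrangement estimates for the Fourier transform, whereas your interpolation scheme is the older Stein--Weiss-style derivation; both are legitimate, and yours is the more self-contained for the specific power-weight statement used here.
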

\begin{proof}[Proof of Theorem \ref{theorem 1.7} (if)]

It suffices to prove the if-part with $s=\frac{1}{4}$ and $f$ being radial.

Firstly, \cite{SW} gives
$$\hat{f}(\xi)=(2\pi)^{\frac{n}{2}}|\xi|^{1-\frac{n}{2}}\int_0^\infty f(r) J_{\frac{n}{2}-1}(r|\xi|)r^{\frac{n}{2}}\,\textrm{d}r.
$$

Secondly, we set $t(x): \mathbb{R}^n\rightarrow\mathbb{R}$ be radial measurable and
$$\mathcal{B}_{t(x)}f(x):=(2\pi)^{-n}\int_{\mathbb{R}^n} e^{ix\cdot\xi+it(x)|\xi|\sqrt{1+|\xi|^2}}\hat{f}(\xi)\,\textrm{d}\xi, \quad x\in\mathbb{R}^n,
$$
whence $$\mathcal{B}_{t(u)}f(u)=(2\pi)^{\frac{n}{2}-n}u^{1-\frac{n}{2}}\int_0^\infty J_{\frac{n}{2}-1}(ru)e^{it(u)r\sqrt{1+r^2}}\hat{f}(r)r^{\frac{n}{2}}\,\textrm{d}r, \quad u>0,$$
where
$$
\begin{cases} \mathcal{B}_{t(u)}f(u)=\mathcal{B}_{t(x)}f(x)\ \ \text{as}\ \ u=|x|;\\
\hat{f}(r)=\hat{f}(\xi)\ \ \text{as}\ \ r=|\xi|.
\end{cases}
$$

Thirdly, in order to obtain \eqref{1.11}, it remains to prove
\begin{equation}\label{2.4}
\left(\int_0^\infty\left|\mathcal{B}_{t(u)}f(u)\right|^q u^{q(\frac{n}{2}-s)-1} \,\textrm{d}u\right)^{\frac{1}{q}}
\lesssim\left(\int_0^\infty\left|\hat{f}(r)\right|^2 r^{2s} r^{n-1} \,\textrm{d}r\right)^{\frac{1}{2}}.
\end{equation}
The following three steps will be carried out.

\begin{itemize}
\item[$\rhd$] Let
$$
\begin{cases} g(r):=\hat{f}(r)r^{s}r^{\frac{n}{2}-\frac{1}{2}}\quad\forall\quad r>0;\\
\left(\int_0^\infty\left|\hat{f}(r)\right|^2 r^{2s}r^{n-1} \,\textrm{d}r\right)^{\frac{1}{2}}
=\left(\int_0^\infty|g(r)|^2 \,\textrm{d}r\right)^{\frac{1}{2}}.
\end{cases}
$$
\item[$\rhd$] In order to control the left hand side of  \eqref{2.4}, we estimate
\begin{eqnarray*}
\mathcal{B}_{t(u)}f(u)u^{\frac{n}{2}-s-\frac{1}{q}}
&=&(2\pi)^{\frac{n}{2}-n}u^{1-\frac{n}{2}}u^{\frac{n}{2}-s-\frac{1}{q}}
\int_0^\infty J_{\frac{n}{2}-1}(ru)e^{it(u)r\sqrt{1+r^2}}\hat{f}(r)r^{\frac{n}{2}}\,\textrm{d}r\\
&=&(2\pi)^{-\frac{n}{2}}u^{1-s-\frac{1}{q}}\int_0^\infty J_{\frac{n}{2}-1}(ru)e^{it(u)r\sqrt{1+r^2}}r^{-s}r^{\frac{1}{2}}g(r)\,\textrm{d}r\\
&:=&(2\pi)^{-\frac{n}{2}}Dg(u),
\end{eqnarray*}
where $$Dg(u):=u^{1-s-\frac{1}{q}}\int_0^\infty J_{\frac{n}{2}-1}(ru)e^{it(u)r\sqrt{1+r^2}}r^{-s}r^{\frac{1}{2}}g(r)\,\textrm{d}r,\quad r>0.$$
and consequently,
$$\left(\int_0^\infty\left|\mathcal{B}_{t(u)}f(u)\right|^q u^{q(\frac{n}{2}-s)-1} \,\textrm{d}u\right)^{\frac{1}{q}}
=\left(\int_0^\infty\left|(2\pi)^{-\frac{n}{2}}Dg(u)\right|^q \,\textrm{d}u\right)^{\frac{1}{q}}.$$
It reduces to prove
\begin{equation*}\label{2.5}
\left(\int_0^\infty\left|Dg(u)\right|^q \,\textrm{d}u\right)^{\frac{1}{q}}
\lesssim\left(\int_0^\infty\left|g(r)\right|^2 \,\textrm{d}r\right)^{\frac{1}{2}}.
\end{equation*}
But, since the adjoint operator of $D$ is given by
$$D^\ast h(r)=r^{-s} r^{\frac{1}{2}}\int_0^\infty J_{\frac{n}{2}-1}(ru)e^{-it(u)r\sqrt{1+r^2}}u^{1-s-\frac{1}{q}}h(u)\,\textrm{d}u,\quad r>0,
$$
it suffices to prove
\begin{equation*}\label{2.6}
\|D^\ast h\|_{L^2(0,\infty)}\lesssim\|h\|_{L^p(0,\infty)},
\end{equation*}
where
\begin{equation*}
\left\{
\begin{aligned}
 &\frac{1}{p}+\frac{1}{q}=1;\\
&\frac{1}{4}\leq s<\frac{1}{2};\\
&2\leq q\leq\frac{2}{1-2s};\\
&1<\frac{2}{1+2s}\leq p\leq2.\\
\end{aligned}
\right.
\end{equation*}
Upon setting
$$\sigma:=\frac{1}{q}+s-\frac{1}{2},$$
we see
$$D^\ast h(r)=r^{-s}\int_0^\infty (ru)^{\frac{1}{2}}J_{\frac{n}{2}-1}(ru)e^{-it(u)r\sqrt{1+r^2}}u^{-\sigma}h(u)\,\textrm{d}u.$$
Therefore, we are led to estimate the Bessel function in the last formula.

\begin{itemize}

\item By Lemma \ref{lemma 2.5}, there exist $b_1$ and $b_2$ depending only on $n$ such that
\begin{align}\label{2.7}
t^\frac{1}{2}J_{\frac{n}{2}-1}(t)=b_1e^{it}+b_2e^{-it}+O\left(\min\left\{1,\frac{1}{t}\right\}\right),\quad t>0.
\end{align}
In fact, Lemma \ref{lemma 2.5} yields
$$J_{\frac{n}{2}-1}(t)=\sqrt{\frac{2}{\pi t}}\cos\left(t-\frac{\pi(n-1)}{4}\right)+O(t^{-\frac{3}{2}}) \quad \textrm{as} \quad t\rightarrow\infty,
$$
and so
\begin{eqnarray*}
t^\frac{1}{2}J_{\frac{n}{2}-1}(t)
&=&t^\frac{1}{2}\sqrt{\frac{2}{\pi t}}\cos\left(t-\frac{\pi(n-1)}{4}\right)+O(t^{-1})\\
&=&\sqrt{\frac{2}{\pi}}\cos\left(\frac{\pi(n-1)}{4}\right)\cos t+\sqrt{\frac{2}{\pi}}\sin\left(\frac{\pi(n-1)}{4}\right)\sin t+O(t^{-1})\\
&=&(b_1+b_2)\cos t -i(b_1-b_2)\sin t+O(t^{-1})\\
&=&b_1e^{it}+b_2e^{-it}+O(t^{-1}),
\end{eqnarray*}
where $$b_1=\frac{1}{2}\sqrt{\frac{2}{\pi }}\left(\cos\left(\frac{\pi(n-1)}{4}\right)-i\sin\left(\frac{\pi(n-1)}{4}\right)\right),$$
$$b_2=\frac{1}{2}\sqrt{\frac{2}{\pi }}\left(\cos\left(\frac{\pi(n-1)}{4}\right)+i\sin\left(\frac{\pi(n-1)}{4}\right)\right).$$

\item When $t>1$, we have
\begin{align}\label{2.8}
\left|t^\frac{1}{2}J_{\frac{n}{2}-1}(t)-(b_1e^{it}+b_2e^{-it})\right|\lesssim t^{-1}.
\end{align}

\item For the case of $0<t<1$ and $n\ge 2$, we have
$$
\begin{cases}
J_m(t)=\frac{(\frac{t}{2})^m}{\Gamma(m+\frac{1}{2})\pi^{\frac{1}{2}}}\int_{-1}^1 e^{i tx}(1-x^2)^{m-\frac{1}{2}}\,\textrm{d}x\quad\text{as}\ \  m>-\frac{1}{2};\\
|J_m(t)|\lesssim t^m \quad \text{as}\ \  m>-\frac{1}{2}~\ \&\ ~ t>0;\\
|J_m(t)|\lesssim t^{-\frac{1}{2}} \quad \text{as}\ \  m>-\frac{1}{2} ~ \ \&\  ~0<t\leq1;\\
|J_{\frac{n}{2}-1}(t)|\lesssim t^{-\frac{1}{2}} \quad \text{as}\ \ 0<t\leq1.
\end{cases}
$$
Accordingly, if $0<t\leq1$, then
\begin{eqnarray*}
\left|t^\frac{1}{2}J_{\frac{n}{2}-1}(t)-(b_1e^{it}+b_2e^{-it})\right|
&\leq&|t^\frac{1}{2}J_{\frac{n}{2}-1}(t)|+|b_1e^{it}|+|b_2e^{-it}|\\
&\lesssim&t^\frac{1}{2}t^{-\frac{1}{2}}+|b_1|+|b_2|\\
&\lesssim&1,
\end{eqnarray*}
and hence
\begin{align}\label{2.9}
\left|t^\frac{1}{2}J_{\frac{n}{2}-1}(t)-(b_1e^{it}+b_2e^{-it})\right|\lesssim 1.
\end{align}
\end{itemize}

\item[$\rhd$] Upon combining \eqref{2.8} with \eqref{2.9}, we get \eqref{2.7}. Now,
\eqref{2.7} derives
\begin{equation}\label{2.10}
\begin{cases}
\left|t^\frac{1}{2}J_{\frac{n}{2}-1}(t)-(b_1e^{it}+b_2e^{-it})\right|\lesssim\frac{1}{t},\quad t>1;\\
\left|t^\frac{1}{2}J_{\frac{n}{2}-1}(t)-(b_1e^{it}+b_2e^{-it})\right|\lesssim1,\quad 0<t\leq1,
\end{cases}
\end{equation}
where $b_1$ and $b_2$ depend only on $n$.
With the help of \eqref{2.10} we obtain
\begin{align}\label{2.12}
D^\ast h(r):=b_1B_1(r)+b_2B_2(r)+B_3(r),
\end{align}
where
\begin{equation*}
\left\{
\begin{aligned}
 &B_1(r)=r^{-s}\int_0^\infty e^{iru}e^{-it(u)r\sqrt{1+r^2}}u^{-\sigma}h(u)\,\textrm{d}u;\\
&B_2(r)=r^{-s}\int_0^\infty e^{-iru}e^{-it(u)r\sqrt{1+r^2}}u^{-\sigma}h(u)\,\textrm{d}u;\\
&|B_3(r)|\leq Cr^{-s}
\int_0^\infty\min\left\{1,\frac{1}{ru}\right\}u^{-\sigma}|h(u)|\,\textrm{d}u.\\
\end{aligned}
\right.
\end{equation*}
In what follows, we estimate $B_1(r)$, $B_2(r)$ and $B_3(r)$ respectively.
\begin{itemize}
\item Via defining $$B(r):=r^{-s}\int_0^\infty e^{iru}e^{-it(u)|r|\sqrt{1+|r|^2}}u^{-\sigma}h(u)\,\textrm{d}u,\quad r\in\mathbb{R},$$
and choosing a real-valued function $\rho\in C_c^\infty(\mathbb{R})$ such that
\begin{equation*}
\left\{
\begin{aligned}
 &\rho(\xi)=
\left\{
\begin{aligned}
 &1 \quad \textrm{as} \quad |\xi|\leq 1;\\
&0 \quad \textrm{as} \quad |\xi|\geq 2; \\
\end{aligned}
\right.\\
&\rho_N(\xi):=\rho\left(\frac{\xi}{N}\right) \quad \forall N\in \mathbb{N},\\
\end{aligned}
\right.
\end{equation*}
as well as letting
 $$B_N(r):=\rho_N(r)|r|^{-s}\int_0^\infty e^{iru}e^{-it(u)|r|\sqrt{1+|r|^2}}u^{-\sigma}h(u)\,\textrm{d}u,
 $$
we utilize Fubini's theorem to get
\begin{align}\label{2.14}
\int_\mathbb{R}|B_N(r)|^2\,\textrm{d}r
:=\int_0^\infty\int_0^\infty I(u, v)u^{-\sigma}h(u)v^{-\sigma}\overline{h(v)}\,\textrm{d}u\,\textrm{d}v,
\end{align}
where $$I(u, v):=\int_\mathbb{R} e^{i((u-v)r-(t(u)-t(v))|r|\sqrt{1+|r|^2})}|r|^{-s}\rho_N^2(r)\,\textrm{d}r.$$

\item By Lemma \ref{lemma 4.2}, we have
\begin{align}\label{2.15}
|I(u, v)|\lesssim\frac{1}{|u-v|^{\frac{1}{2}}}\ \ \text{under}\ \ s=\frac{1}{4}.
\end{align}
From \eqref{2.14}, \eqref{2.15} and Parseval's equality, we have
\begin{eqnarray*}
\|B_N\|_{L^2(\mathbb{R})}^2
&\lesssim& \int_0^\infty\int_0^\infty \frac{1}{|u-v|^{\frac{1}{2}}}u^{-\sigma}h(u)v^{-\sigma}|h(v)|\,\textrm{d}u\,\textrm{d}v\\
&\thickapprox& \int_\mathbb{R}\int_\mathbb{R}\frac{1}{|u-v|^{\frac{1}{2}}}u^{-\sigma}h_1(u)v^{-\sigma}|h_1(v)|\,\textrm{d}u\,\textrm{d}v\\
&\thickapprox& \int_\mathbb{R}I_{\frac{1}{2}}(v^{-\sigma}|h_1(v)|)(u)u^{-\sigma}|h_1(u)|\,\textrm{d}u\\
&\thickapprox& \int_\mathbb{R}|\xi|^{-\frac{1}{2}}(u^{-\sigma}|h_1(u)|)^\wedge(\xi)\overline{(u^{-\sigma}|h_1(u)|)^\wedge(\xi)}\,\textrm{d}\xi\\
&\thickapprox& \int_\mathbb{R}|\xi|^{-\frac{1}{2}}\left|(u^{-\sigma}|h_1(u)|)^\wedge(\xi)\right|^2\,\textrm{d}\xi,
\end{eqnarray*}
where
\begin{eqnarray*}
h_1(u)=
\begin{cases}
h(u) & \textrm{as} \quad u\geq0;
\cr 0 & \textrm{as} \quad u<0.
\end{cases}
\end{eqnarray*}

\item By Pitt's inequality, we have
$$\left(\int_\mathbb{R}\left|\hat{f}(\xi)\right|^2|\xi|^{-2s}\,\textrm{d}\xi\right)^{\frac{1}{2}}
\lesssim\left(\int_\mathbb{R}\left|f(x)\right|^p|x|^{(s+\frac{1}{2})p-1}\,\textrm{d}x\right)^{\frac{1}{p}},$$
where
\begin{equation*}
\left\{
\begin{aligned}
 &\frac{1}{4}\leq s<\frac{1}{2};\\
&\frac{2}{1+2s}\leq p\leq2.\\
\end{aligned}
\right.
\end{equation*}
Consequently,
\begin{eqnarray*}
\|B_N\|_{L^2(\mathbb{R})}^2
&\lesssim& \left(\int_\mathbb{R}\left|u^{-\sigma}h_1(u)\right|^p|u|^{\frac{3}{4}p-1}\,\textrm{d}u\right)^{\frac{2}{p}}\\
&\thickapprox& \left(\int_\mathbb{R}u^{-\sigma p+\frac{3}{4}p-1}|h_1(u)|^p \,\textrm{d}u\right)^{\frac{2}{p}}\\
&\thickapprox& \|h_1\|_{L^p(\mathbb{R})}^2\\
&\thickapprox& \|h\|_{L^p(0,\infty)}^2,
\end{eqnarray*}
namely, $$\|B_N\|_{L^2(\mathbb{R})}\lesssim\|h\|_{L^p(0,\infty)}.$$
According to Fatou's lemma we obtain
$$\left(\int_\mathbb{R}|B(r)|^2\,\textrm{d}r\right)^{\frac{1}{2}}\lesssim\|h\|_{L^p(0,\infty)}.$$
Furthermore,
\begin{eqnarray*}
\left(\int_0^\infty|B_i(r)|^2\,\textrm{d}r\right)^{\frac{1}{2}}
\lesssim \left(\int_\mathbb{R}|B(r)|^2\,\textrm{d}r\right)^{\frac{1}{2}}
\lesssim \|h\|_{L^p(0,\infty)}\ \ \forall\ \ i=1,2.
\end{eqnarray*}
\item Next we dominate $B_3(r)$ according to two situations.
\begin{enumerate}
  \item[\rm(i)] Under $0<r<1$, it suffices to prove
  $$\left(\int_0^1 |B_3(r)|^2\,\textrm{d}r\right)^{\frac{1}{2}}\lesssim\|h\|_{L^p(0,\infty)}.$$
  H\"{o}lder's inequality derives
  \begin{eqnarray*}
  |B_3(r)|
  &\lesssim& \int_0^\infty\min\left\{1,\frac{1}{ru}\right\}u^{-\sigma}|h(u)|\,\textrm{d}u\\
  &\lesssim& \int_0^{\frac{1}{r}}u^{-\sigma}|h(u)|\,\textrm{d}u+\frac{1}{r}\int_\frac{1}{r}^\infty u^{-1-\sigma}|h(u)|\,\textrm{d}u\\
  &\lesssim& \left(\int_0^{\frac{1}{r}}u^{-\sigma q}\,\textrm{d}u\right)^\frac{1}{q}\|h\|_{L^p(0,\infty)}
  +\frac{1}{r}\left(\int_\frac{1}{r}^\infty u^{(-1-\sigma)q}\,\textrm{d}u\right)^\frac{1}{q} \|h\|_{L^p(0,\infty)}\\
  &\thickapprox& r^{\sigma-\frac{1}{q}}\|h\|_{L^p(0,\infty)}\\
  &\thickapprox& r^{s-\frac{1}{2}}\|h\|_{L^p(0,\infty)},
  \end{eqnarray*}
 whence
  $$\left(\int_0^1 |B_3(r)|^2\,\textrm{d}r\right)^{\frac{1}{2}}
  \lesssim\left(\int_0^1 r^{2s-1}\,\textrm{d}r\right)^{\frac{1}{2}}\|h\|_{L^p(0,\infty)}
  \lesssim\|h\|_{L^p(0,\infty)}.$$

\item[\rm(ii)] Under $r\geq1$, it suffices to prove
  $$\left(\int_1^\infty |B_3(r)|^2\,\textrm{d}r\right)^{\frac{1}{2}}\lesssim\|h\|_{L^p(0,\infty)}.$$
  On the one hand, since
  \begin{eqnarray*}
  |B_3(r)|
  &\lesssim&  r^{-s}\int_0^{\frac{1}{r}}u^{-\sigma}|h(u)|\,\textrm{d}u+ r^{-s}\int_\frac{1}{r}^\infty \frac{1}{ru}u^{-\sigma}|h(u)|\,\textrm{d}u\\
  &\thickapprox&  r^{-s}\int_0^{\frac{1}{r}}u^{-\sigma}|h(u)|\,\textrm{d}u+r^{-1-s}\int_\frac{1}{r}^\infty u^{-\sigma-1}|h(u)|\,\textrm{d}u\\
  &:=&E_1(r)+E_2(r),
  \end{eqnarray*}
  setting
  $$F_1(t):=\frac{1}{t}E_1\left(\frac{1}{t}\right),\quad 0<t<1$$
   yields
  \begin{eqnarray*}
  F_1(t)
  &=&  t^{-1+s}\int_0^t u^{-\sigma}|h(u)|\,\textrm{d}u\\
  &\leq& \int_0^t (t-u)^{-1+s} u^{-\sigma}|h(u)|\,\textrm{d}u\\
  &\leq& \int_\mathbb{R}|t-u|^{-1+s} u^{-\sigma}|h_1(u)|\,\textrm{d}u\\
  &\leq&I_s(u^{-\sigma}|h_1(u)|)(t),
  \end{eqnarray*}
  Similarly to the estimate for $\|B_N\|_{L^2(\mathbb{R})}$, we have
\begin{eqnarray*}
  \left(\int_1^\infty \left|E_1(r)\right|^2\,\textrm{d}r\right)^{\frac{1}{2}}
  &\lesssim& \left(\int_0^1 |F_1(t)|^2\,\textrm{d}t\right)^{\frac{1}{2}}\\
  &\lesssim&\left(\int_\mathbb{R} \left|I_s(u^{-\sigma}|h_1(u)|)(t)\right|^2\,\textrm{d}t\right)^{\frac{1}{2}}\\
  &\thickapprox&\left(\int_\mathbb{R} \left|(u^{-\sigma}|h_1(u)|)^\wedge(\xi)\right|^2 |\xi|^{-2s}\,\textrm{d}\xi\right)^{\frac{1}{2}}\\
  &\lesssim&\|h_1\|_{L^p(\mathbb{R})}\\
  &\thickapprox&\|h\|_{L^p(0,\infty)},
  \end{eqnarray*}
thereby producing $$\left(\int_1^\infty |E_1(r)|^2\,\textrm{d}r\right)^{\frac{1}{2}}
\lesssim\|h\|_{L^p(0,\infty)}.$$

On the other hand, we want to prove $$\left(\int_1^\infty |E_2(r)|^2\,\textrm{d}r\right)^{\frac{1}{2}} \lesssim\|h\|_{L^p(0,\infty)}.$$
Now, setting
$$F_2(t):=\frac{1}{t}E_2\left(\frac{1}{t}\right),\quad 0<t<1
$$
implies
  \begin{eqnarray*}
  F_2(t)
  &=&  t^{s}\int_t^\infty u^{-1-\sigma}|h(u)|\,\textrm{d}u\\
  &\leq& \int_t^\infty u^s u^{-1-\sigma} |h(u)|\,\textrm{d}u\\
  &\leq& \int_\mathbb{R}|t-u|^{-1+s} u^{-\sigma}|h_1(u)|\,\textrm{d}u\\
  &\leq&I_s(u^{-\sigma}|h_1(u)|)(t).
  \end{eqnarray*}
  Similarly to the estimate of
  $$\left(\int_1^\infty |E_1(r)|^2\,\textrm{d}r\right)^{\frac{1}{2}} ,$$ we have
  $$\left(\int_1^\infty |E_2(r)|^2\,\textrm{d}r\right)^{\frac{1}{2}}
  \lesssim\|h\|_{L^p(0,\infty)}.$$
  Combining \eqref{2.12} with the estimates for $B_1(r)$-$B_2(r)$-$B_3(r)$ yields \eqref{2.4} and then \eqref{1.11}.
\end{enumerate}
\end{itemize}
\end{itemize}

\end{proof}
\subsection{Proof of Theorem \ref{theorem 1.7} (only-if)}

This part is constructive.

\begin{proof}[Proof of Theorem \ref{theorem 1.7} (only-if)]
To do so, choose a nonnegative radial function $\varphi\in C_c^\infty(\mathbb{R}^n)$ such that
$$
\begin{cases}
\text{supp} \varphi \subset\{\xi:1\leq|\xi|\leq2\};\\
\varphi(\xi)=1\ \ \text{as}\ \ \frac{5}{4}\leq|\xi|\leq \frac{7}{4};\\
\hat{f}(\xi):=\varphi\left(\frac{\xi}{\lambda}\right)\ \ \text{as}\ \ \lambda>0.
\end{cases}
$$
It is easy to see that
$$\|f\|_{\dot{H}^s(\mathbb{R}^n)}\approx\lambda^{\frac{n}{2}+s}.$$
Since
\begin{eqnarray*}
\mathcal{B}_tf(x)
&=&\int_{\mathbb{R}^n} e^{ix\cdot\xi}e^{it|\xi|\sqrt{1+|\xi|^2}}\varphi\left(\frac{\xi}{\lambda}\right)\,\textrm{d}\xi\\
&=&\lambda^n \int_{\mathbb{R}^n} e^{i\lambda x\cdot\eta}e^{it|\lambda\eta|\sqrt{1+|\lambda\eta|^2}}\varphi(\eta)\,\textrm{d}\eta,
\end{eqnarray*}
taking $t=0$ derives
$$\mathcal{B}_{0}f(x)=\lambda^n \int_{\mathbb{R}^n} e^{i\lambda x\cdot\eta}\varphi(\eta)\,\textrm{d}\eta=\lambda^n\hat{\varphi}(\lambda x).$$
Also, since
$$\hat{\varphi}(0)=\int_{\mathbb{R}^n} \varphi(x)\,\textrm{d}x\geq\int_{\frac{5}{4}\leq|\xi|\leq \frac{7}{4}} \varphi(x)\,\textrm{d}x>1,$$
there exist $0<\delta<\frac{\lambda}{2}$ such that
$$\hat{\varphi}(\lambda x)>\frac{1}{2} \quad\forall\quad |x|<\frac{\delta}{\lambda}.$$
Accordingly,
$$|\mathcal{B}^{\ast\ast}f(x)|\geq|\mathcal{B}_{0}f(x)|\geq c_0\lambda^n\ \ \&\ \
c_0=\frac{1}{2(2\pi)^n}.
$$

Now, if \eqref{1.11} holds, then
\begin{align*}
 \lambda^{\frac{n}{2}+s}
&\approx\|f\|_{\dot{H}^s(\mathbb{R}^n)}\\
&\gtrsim \left(\int_{\mathbb{R}^n}|\mathcal{B}^{\ast\ast}f(x)|^q |x|^\alpha \,\textrm{d}x\right)^\frac{1}{q}\\
&\geq\left(\int_{B(0,1)}|\mathcal{B}^{\ast\ast}f(x)|^q |x|^\alpha \,\textrm{d}x\right)^\frac{1}{q}\\
&\geq c_0\left(\int_{|x|<\frac{\delta}{\lambda}}\lambda^{nq} |x|^\alpha \,\textrm{d}x\right)^\frac{1}{q}\\
&=c_1 \lambda^{n-\frac{\alpha+n}{q}},
\end{align*}
where $$c_1=c_0\left(\frac{\omega_{n-1}\delta^{\alpha+n}}{\alpha+n}\right)^\frac{1}{q}$$ and $\omega_{n-1}$ is the area of unit sphere in $\mathbb{R}^n$.
Therefore,
\begin{equation}
\label{eFe}
\lambda^{n-\frac{\alpha+n}{q}}\lesssim\lambda^{\frac{n}{2}+s}.
\end{equation}
\begin{itemize}
\item[$\rhd$] Upon letting $\lambda\rightarrow\infty$ in \eqref{eFe}, we get
$$\alpha \geq q\Big(\frac{n}{2}-s\Big)-n.$$
\item[$\rhd$] Upon letting $\lambda\rightarrow0$ in \eqref{eFe}, we get
$$\alpha \leq q\Big(\frac{n}{2}-s\Big)-n.$$
\end{itemize}
As a consequence, we have
$$\alpha =q\Big(\frac{n}{2}-s\Big)-n.$$

\end{proof}



\bigskip

\noindent  Dan Li

\smallskip

\noindent  Laboratory of Mathematics and Complex Systems
(Ministry of Education of China),
School of Mathematical Sciences, Beijing Normal University,
Beijing 100875, People's Republic of China

\smallskip

\noindent {\it E-mails}: \texttt{danli@mail.bnu.edu.cn}

\bigskip

\noindent Junfeng Li(Corresponding author)

\smallskip

\noindent School of Mathematical Sciences, Dalian University of Technology, Dalian, LN, 116024, China

\smallskip

\noindent{\it E-mail}: \texttt{junfengli@dlut.edu.cn}

\bigskip

\noindent Jie Xiao

\smallskip

\noindent Department of Mathematics and Statistics,
		Memorial University, St. John's, NL A1C 5S7, Canada

\smallskip

\noindent{\it E-mail}: \texttt{jxiao@math.mun.ca}

\end{document}